\newtheorem{theorem}{Theorem}[section] 
\newtheorem{lemma}[theorem]{Lemma}
\newtheorem{proposition}[theorem]{Proposition}
\theoremstyle{definition}
\theoremstyle{remark}
\numberwithin{equation}{section} 
\newcommand{\field}[1]{\mathbb{#1}} 
\newcommand{\R}{\field{R}}
\newcommand{\N}{\field{N}} 
\newcommand{\C}{\field{C}}
\newcommand{\res}{\mathop{\rm res}}
\newcommand{\supp}{\mathop{\rm supp}}
\newcommand{\const}{{\rm const}}
\renewcommand{\Re}{\mathop{\rm Re}}
\renewcommand{\Im}{\mathop{\rm Im}}
\title{Trajectories of quadratic differentials for Jacobi polynomials with complex parameters}
\author{Andrei Mart\'{\i}nez-Finkelshtein, Pedro Mart\'{\i}nez-Gonz\'alez, \\ and Faouzi Thabet}
\begin{document}

\vspace{1cm} \maketitle

\begin{abstract}
Motivated by the study of the asymptotic behavior of Jacobi polynomials $\left( P_{n}^{(nA,nB)}\right) _{n}$ 
with $A\in \C$ and $B>0$ we establish the global structure of trajectories of the related rational quadratic differential on $\C$. As a consequence, the asymptotic zero distribution (limit of the root-counting
measures of $\left( P_{n}^{(nA,nB)}\right) _{n}$) is described. The support of this measure is formed by an open arc in the complex plan (critical trajectory of the aforementioned quadratic differential) that can be characterized by the symmetry property of its equilibrium measure in a certain external field.
\end{abstract}

\textit{AMS Mathematics Subject Classifications (2010):} 33C45; 42C05; 30C15; 30C85; 30F15; 30F30; 31A15

\emph{Keywords:} Asymptotics, zeros, orthogonal polynomials, equilibrium measure, $S$-property, quadratic differential, Riemann surface.

\section{Introduction} \label{sec:Intro}

The motivation of this work is the large-degree analysis 
of the behavior of the Jacobi polynomials $P_{n}^{(\alpha ,\beta) }$, when the parameters $\alpha$, $\beta$ are complex and  
depend on the degree $n$ linearly. Recall that these polynomials can be given explicitly by (see \cite{NIST,szego:1975}) 
\begin{equation*} 
P_{n}^{(\alpha ,\beta )}\left( z\right) =2^{-n}\sum_{k=0}^{n}\left( 
\begin{array}{c}
n+\alpha \\ 
n-k%
\end{array}%
\right) \left( 
\begin{array}{c}
n+\beta \\ 
\ k%
\end{array}%
\right) \left( z-1\right) ^{k}\left( z+1\right) ^{n-k},
\end{equation*}
or, equivalently, by the well-known Rodrigues formula
\begin{equation} \label{RodrJac}
P_{n}^{(\alpha ,\beta) }\left( z\right) =\frac{1}{2^{n}n!}\left( z-1\right)
^{-\alpha }\left( z+1\right) ^{-\beta }\left( \frac{d}{dz}\right) ^{n}\left[
\left( z-1\right) ^{n+\alpha }\left( z+1\right) ^{n+\beta }\right] .
\end{equation}
Clearly, polynomials $P_{n}^{(\alpha ,\beta) }$ are entire functions of the complex parameters $\alpha
,\beta  $.

If we fix $\alpha ,\beta  \in \C$  and allow $n\to \infty$, the zeros of $P_{n}^{(\alpha ,\beta )}$ will cluster on $[-1,1]$ and distribute there according to  the well-known arcsine law. A non-trivial asymptotic behavior can be obtained in the case of varying coefficients $\alpha$ and $\beta$. Namely, we will consider sequences 
\begin{equation}
\label{eq_P}
p_n(z)=P_{n}^{(\alpha_n ,\beta_n )}(z), \quad \alpha_n=nA, \quad \beta_n=nB,
\end{equation}
where both $A$ and $B$ are fixed. The case $A, B\geq 0$ can be studied by the already standard techniques from the potential theory \cite{Gonchar:84} or by the saddle point method applied to their integral representation, see e.g.~\cite{MR1114785}. The general situation $A, B\in \R$ was analyzed in \cite{MR2124460,MR1805976,MR2142296}. 

In this paper we are interested in the situation when at least one of the parameters, $A$ or $B$, is non-real, see e.g.~Figure~\ref{fig:onlyzeros}. To be more precise, we assume that  
\begin{equation}
A \notin  \R , \quad B>0.  
\label{cond sur Aet B}
\end{equation}
Clearly, results for the case $A>0$ and $B \notin  \R$ can be easily deduced by reversing the roles of $1$ and $-1$.

\begin{figure}[htb]
\centering \begin{overpic}[scale=0.7]{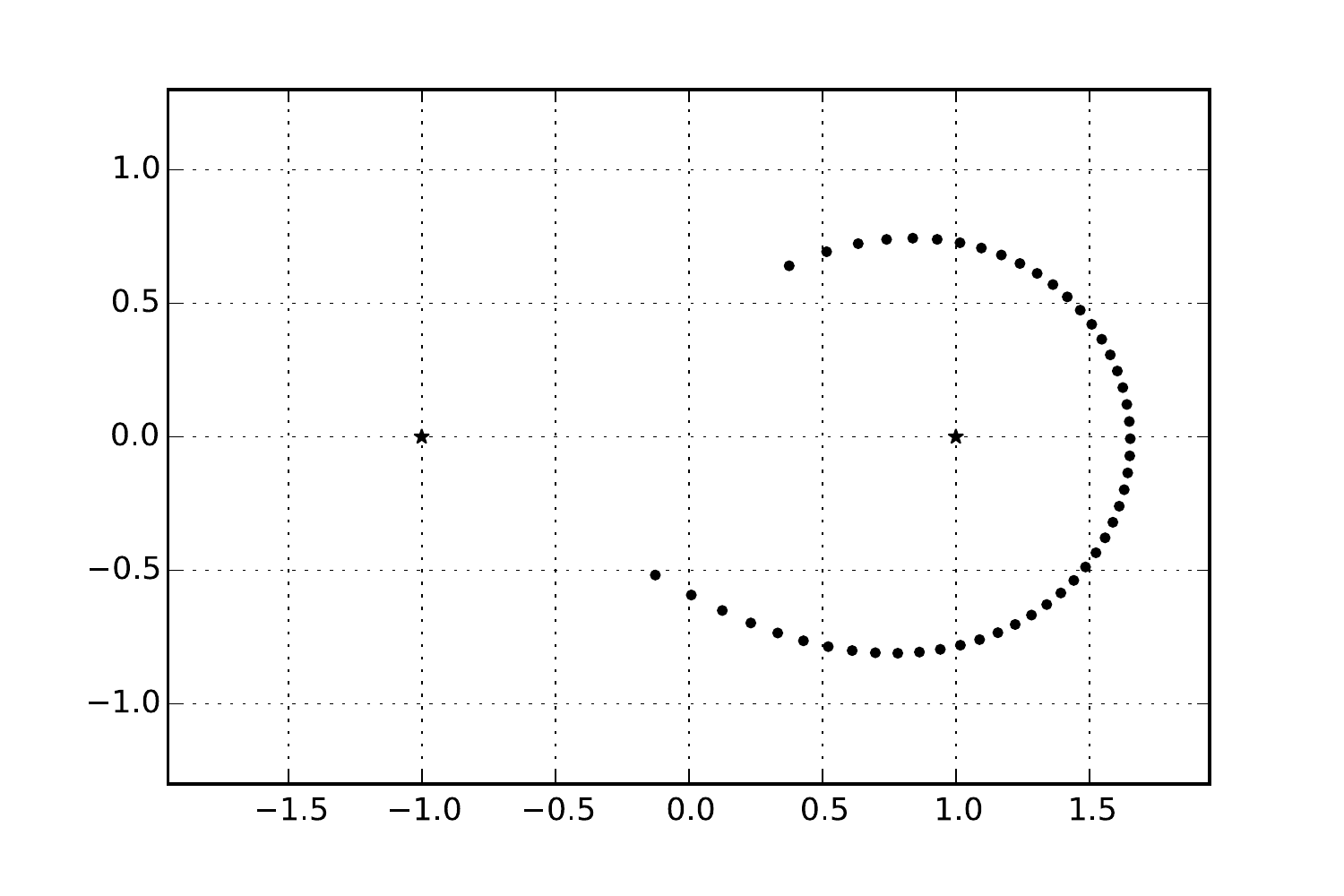}%
\put(29,31){\small $-1 $}
\put(68,33){\small $1 $}
\end{overpic}
\caption{Zeros of the polynomial $p_{50}$ for    $A=  -1.1 + 0.1i$ and $ B= 1$.}
\label{fig:onlyzeros}
\end{figure}

The key feature of the Jacobi polynomials that we use in their asymptotic analysis is their orthogonality property. It is well known that when $\alpha ,\beta  
>-1$, the Jacobi polynomials are orthogonal on $[-1,1]$
with respect to the weight function $(1-x)^{\alpha }(1+x)^{\beta }$. But as it was shown in \cite{MR2149265}, for general $\alpha ,\beta  \in \C$ we can associate with  $P_{n}^{(\alpha ,\beta )}$ a complex, non-hermitian orthogonality, where the integration goes along some contour in the complex plane. This is the key, at least in theory, to the study of the limit root location, as well as of the so-called weak (or $n$-th root) asymptotics of these polynomials (via the Gonchar-Rakhmanov-Stahl (GRS) theory \cite{Gonchar:87,Stahl:86}), and of their strong uniform asymptotics on the whole plane (by means of the Riemann-Hilbert (RH) steepest descent method of Deift-Zhou \cite{MR2000g:47048}).

The GRS method, in its general form, allows to reduce the description of the cluster set of the zeros of the sequence \eqref{eq_P}--\eqref{cond sur Aet B} to the problem of finding in a given homotopic class of curves the one  with the so-called $S$-property in the associated external field $\psi$ (see the definition \eqref{simetria} in Section~\ref{section:equilibrium}), which in our case is
\begin{equation}\label{defPsiexternalintro}
\psi(z) =-  \frac{1}{2} \Re   \left( A\log(z-1) + B\log(z+1)    \right) , \quad z\in \C\setminus \gamma_{A,B}.
\end{equation}

The most general known existence theorem for the $S$-curves is contained in \cite{Rakhmanov:2012fk} under the assumption that external field is harmonic in a complement to a finite set (see also \cite{KuijlaarsSilva2015} for the polynomial external fields). This is the case of $\psi$ in \eqref{defPsiexternalintro} for $A, B\in \R$, see \cite{MR1805976} for the analysis of the weak asymptotics (via the GRS theory) and  \cite{MR2124460,MR2142296} for the strong uniform asymptotics on $\C$ (by the RH technique).

However, $\psi$ is not harmonic (it is not even single valued) in $\C$ if we assume \eqref{cond sur Aet B}, which adds a new essential feature to the problem. In this case, the explicit construction of the curve with the $S$-property is a consequence of the analysis of  the structure of 
trajectories of the following quadratic differential on the Riemann sphere $\overline \C$:
\begin{equation*}
\varpi_{A,B}=-\frac{R_{A,B}\left( z\right) }{\left( z^{2}-1\right) ^{2}}\, dz^{2},
\end{equation*}
where  
\begin{equation*}
R_{A,B}\left( z\right) =\left( A+B+2\right) ^{2}z^{2}+2\left( 
A^{2}-B^{2}\right) z+\left( A-B\right) ^{2}-4\left( A+B+1\right) .
\end{equation*}
Although the local structure of such trajectories is well known, the global topology of the so-called critical graph is usually much more difficult to analyze. Thus, one of the central results of this paper is this description, carried out in Section~\ref{section_trajectories}, Theorem \ref{thm:1}. As a result, we claim that for every pair of parameters $(A,B)\in \C^2$ satisfying \eqref{cond sur Aet B} there exists an analytic Jordan arc $\gamma_{A, B}$, homotopic in the punctured plane $\C\setminus \{-1, 1\} $ to a Jordan arc  connecting both zeros of the polynomial $R_{A,B}$ in $\C\setminus (-\infty, 1] $, and given by the equation
$$
\Re \int^{z}  \frac{\sqrt{R_{A,B}(t)}}{t^2-1}\, dt \equiv \const.
$$
 This curve is the limiting set for the zeros of the Jacobi polynomials. Namely, with each $p_n$ we associate its normalized
zero-counting measure $\nu_n=\nu(p_n)$, such that for any compact
set $K$ in $\C$, 
\begin{equation}
\label{zerocountingmeasure}
\int_K d\nu_n=\frac{\text{number of zeros of
$p_n$ in $K$}}{n}\,. 
\end{equation}
Here the zeros are counted with their
multiplicities.

In Section~\ref{section:Jacobi} we show that the sequence $\nu_n$ converges (as $n\to \infty$) in the weak-* topology to a measure  $\mu$, supported on  $\gamma_{A,B}$, absolutely continuous with respect to
the linear Lebesgue measure on $\gamma_{A,B}$, and given by the formula
$$
\frac{d\mu(z)}{ds}=\frac{1}{2\pi }\left| \frac{\sqrt{ R_{A,B}(z)}}{z^2-1}\right|,
$$
see Theorem~\ref{teoJac1}. In Section~\ref{section:equilibrium} we establish that this is the equilibrium measure on $\gamma_{A,B}$ in an external field,  characterized by the above-mentioned $S$-property \eqref{simetria}.

It is worth noticing that the fact $\nu_n \stackrel{*}{\longrightarrow} \mu$ has an alternative interpretation from the point of view of the hypergeometric differential equation corresponding to $P_{n}^{(\alpha ,\beta )}$: for each $n\in \N$, $\nu_n$ is a discrete critical measures in the external field $\psi$,  and $\mu$ is the continuous critical measure in the same field. A general convergence theorem of this kind was proved in \cite{MR2770010}. However, the reduction of the case analyzed here to the results of \cite{MR2770010} is not direct. In particular, the existence and uniqueness of continuous critical measures for external fields with complex parameters is in general an open problem.

Our final remark is that using the construction of the measure $\mu$ and the steepest descent method for the Riemann--Hilbert characterization of the Jacobi polynomials \cite{MR2124460,MR2149265} the strong asymptotic formula can be proved. For instance (see \eqref{CauchiMu} below),
$$
\widehat\mu(z)=\int_{\gamma_{A,B}} \frac{d\mu(t)}{t-z} = \frac{1}{2} \left(\frac{A}{z-1} + \frac{B}{z+1} +    \frac{\sqrt{ R_{A,B}(z)}}{1-z^2} \right), \quad z \in \C\setminus \gamma_{A,B},
$$
where we take the  holomorphic branch of the square root in $\C\setminus \gamma_{A,B}$ such that
$$
\lim_{z\to \infty }\frac{\sqrt{R_{A,B}( z)} }{z}  =A+B+2 .
$$
Then function
$$
G(z)=\exp\left(-\int^z \widehat\mu(d) dt\right)
$$
is holomorphic in the same domain. If $\zeta_\pm$ denote the two zeros of $R_{A,B}$, let
$$
a(z)=\left( \frac{z-\zeta_+}{z-\zeta_-}\right)^{1/4}, \quad a(\infty)=1.
$$
Then there is a sequence $\kappa_n$ such that
$$
p_n(z)=\kappa_n \left( a(z)+ \frac{1}{a(z)}\right) G^n(z) \left( 1+ \mathcal O\left( \frac{1}{n}\right)\right)
$$
locally uniformly in  $\C\setminus \gamma_{A,B}$. Constants $\kappa_n$ are chosen to match the leading term of $p_n$.

This result (as well as its analogues on the limiting curve $\gamma_{A,B}$ and at $\zeta_\pm$) is established following almost literally the arguments of \cite{MR2142296}, and we refer the interested reader to that paper for details.

\section{Critical points of $\varpi_{A,B}$} \label{section_cp}

A rational quadratic differential on the Riemann sphere $\overline{\C}$ is  a form $\varpi=Q(z) dz^2$, where $Q$ is a rational function of a local coordinate $z$. If $z=z(\zeta)$ is a conformal change of variables then
$$
\widetilde Q(\zeta) d\zeta^2 = Q(z(\zeta)) (dz/\zeta)^2 d\zeta^2
$$
represents $\varpi$ in the local parameter $\zeta$. The critical points of $\varpi$ are its zeros and poles; all other points of $\overline{\C}$ are called regular points. We refer the reader to  \cite{MR0096806,Pommerenke75,Strebel:84,MR1929066} for further definitions and properties of quadratic differentials. 

In this section we focus on a specific rational quadratic differential on the Riemann sphere $\overline \C$,
\begin{equation}
\varpi_{A,B}=-\frac{R_{A,B}\left( z\right) }{\left( z^{2}-1\right) ^{2}}\, dz^{2}
\label{qd}
\end{equation}
with
\begin{equation} \label{defRAB}
R_{A,B}\left( z\right) =\left( A+B+2\right) ^{2}z^{2}+2\left( 
A^{2}-B^{2}\right) z+\left( A-B\right) ^{2}-4\left( A+B+1\right) .
\end{equation}
It depends on two parameters, $A$ and $B$, for which \eqref{cond sur Aet B} holds. 
Since
\begin{equation*}
R_{A,B}\left( z\right) = \left( \frac{z-1}{2}\right)^2 R_{-A-B-2,B}\left( \frac{z+3}{z-1}\right), \quad \overline{R_{A,B}\left( \overline{z}\right)}=R_{\overline{A},B}\left( z \right),
\end{equation*}
it is sufficient to restrict our attention to the following case:
\begin{equation} 
\Im(A)>0, \quad \Re(A)>-1-B/2, \quad B>0;
\label{ABcond}
\end{equation}
for any other combination of the parameters $(A,B)$  with $A\notin \R$ and $B>0$ we can readily derive the conclusions by combining the mappings
$$
z \mapsto \overline{z}, \quad z\mapsto \frac{z+3}{z-1}.
$$

The quadratic differential \eqref{qd} has five critical points on $\overline{\C}$; three of them at $\pm 1$ and $\infty$. Since
\begin{equation}\label{localpoles2}
\begin{split}
\varpi_{A,B} &= 
\left( -\frac{4A^{2}}{\left(
z-1\right) ^{2}}+\mathcal{O}\left( \frac{1}{z-1}\right) \right)  dz^2 , \quad z\to  1, \\
\varpi_{A,B}  &= \left( -\frac{4B^{2}}{\left(
z+1\right) ^{2}}+\mathcal{O}\left( \frac{1}{z+1}\right) \right)  dz^2 ,\quad z\to -1,  \\
\varpi_{A,B} &=  \left( -\frac{(A+B+2)^{2}}{u ^{2}}+\mathcal{O}\left( \frac{1}{u^3}\right) \right)  du^2 ,\quad u\to 0, \quad z=1/u,
\end{split}
\end{equation}
under assumptions \eqref{ABcond} these are  double poles of $\varpi_{A,B}$. The other two critical points are the zeros $\zeta _{\pm }$ of $R_{A,B} $, that we describe next.

Let $\C_{\pm}=\left\{ z\in \C :\, \pm \Im \left( z\right) >0\right\} $. 
Fixed $B>0$, we denote by
\begin{equation}
\label{defDsqrt}
D(A, B)=\sqrt{\left( A+1\right) \left(
B+1\right) \left( A+B+1\right) }
\end{equation}
the branch of this function, as a function of $A$, in the cut plane $\C\setminus (-\infty, -1]$, such that  $D(A, B)>0$ for $A>1$. Equivalently, $A\mapsto D(A, B)$ is a conformal mapping of $\C_+$ onto the upper half plane with a slit:
\begin{equation}\label{mappSQRT}
D(\cdot, B):\, \C_+ \mapsto \C_+\setminus \left\{ i x \in \C: \, x \in [0, c] \right\}, \quad c=\frac{B}{2}\sqrt{B+1}>0.
\end{equation}
With this notation, the zeros of $R_{A,B} $ are   
\begin{equation}\label{defZeta}
\zeta _{\pm }=\zeta _{\pm }(A,B)=\dfrac{-A^{2}+B^{2}\pm 4D(A, B)}{\left( A+B+2\right) ^{2}},
\end{equation}
respectively. Since $R_{A,B}\left( -1\right) =4B^{2}$ and $R_{A,B}\left( 1\right)
=4A^{2},$ it is obvious that for $A$ and $B$ satisfying (\ref{ABcond}%
), $\zeta _{+}$ and $\zeta _{-}$ are simple and different from $\pm 1$. Furthermore, the following assertions hold:
\begin{lemma}
\label{beh of zeros}
Under the assumptions \eqref{ABcond},  $\zeta_-\in \C_-$ and $\zeta _{+}\notin ( -\infty ,1] \cup [3,+\infty) $.
In particular, with $x, y\in \R$,
\begin{equation}
\label{boundaryValues1}
\lim_{y\to 0+} \zeta_{\pm}(x+i y, B)\in \begin{cases}
 \C_\pm & \text{if } x\in (-1-B/2, -1), \\
 (\R)_\pm & \text{if } -1\leq x< 0, \\
 (\R)_- & \text{if }  x> 0,
\end{cases}
\end{equation}
where $ (\R)_+$ (resp., $ (\R)_-$)  denotes the boundary values of $\R$ from the upper (resp., lower) half plane.
\end{lemma}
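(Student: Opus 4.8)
Throughout I work in the open quarter-plane $\Omega=\{A:\ \Im A>0,\ \Re A>-1-B/2\}$, on which, for fixed $B>0$, the two functions $\zeta_{\pm}=\zeta_{\pm}(\cdot,B)$ from \eqref{defZeta} are single-valued and analytic: the branch cut $(-\infty,-1]$ of $D$ in \eqref{defDsqrt} lies on $\R$ and hence off $\Omega$, and the only pole $A=-B-2$ of \eqref{defZeta} has $\Re(-B-2)=-B-2<-1-B/2$. These functions are bounded and extend continuously to $\overline\Omega\cup\{\infty\}$ with $\zeta_{\pm}\to-1$ as $A\to\infty$. Moreover $R_{A,B}(\pm1)=4A^2$ and $4B^2$ are nonzero for $A\in\Omega$, so $\pm1$ are never values of $\zeta_{\pm}$. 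I treat separately the ray-exclusions for $\zeta_+$, the half-plane location of $\zeta_-$, and the boundary values \eqref{boundaryValues1}.

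For $\zeta_+\notin(-\infty,1]\cup[3,+\infty)$ I avoid any trajectory analysis and argue algebraically. Fixing a real number $r$ and reading $R_{A,B}(r)=0$ as an equation in $A$, the expansion of \eqref{defRAB} in powers of $A$ is the \emph{real}-coefficient quadratic
$$
(r+1)^2A^2+2(B+2)(r^2-1)A+\big[(B+2)^2r^2-2B^2r+B^2-4B-4\big]=0,
$$
whose discriminant collapses to $\Delta(r)=32(B+1)(r+1)^2(1-r)$. If $r<1$ (and $r\ne-1$) then $\Delta(r)>0$, so any $A$ with $r\in\{\zeta_+(A),\zeta_-(A)\}$ is real, hence not in $\Omega$. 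If $r>1$ then $\Delta(r)<0$, the two roots are complex conjugates with $\Re A=-(B+2)(r-1)/(r+1)$, and a one-line computation gives $\Re A>-1-B/2\iff r<3$; thus for $r\ge3$ no such $A$ lies in $\Omega$. Together with $\zeta_{\pm}\ne\pm1$ this shows that no $A\in\Omega$ sends $\zeta_+$ (or indeed $\zeta_-$) into $(-\infty,1]\cup[3,+\infty)$.

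For $\zeta_-\in\C_-$ I use that $\Im\zeta_-$ is harmonic in $\Omega$ and apply the maximum principle, reducing everything to $\partial\Omega$. Where the real boundary has $x\ge-1$ the root $\zeta_-(x)$ is real, so $\Im\zeta_-=0$, and $\Im\zeta_-\to0$ at $\infty$. On the remaining boundary — the segment $x\in(-1-B/2,-1)$ and the vertical ray $V=\{-1-B/2+i\eta:\eta>0\}$ — the decisive point is that $(A+1)(A+B+1)$ is real and negative there (on $V$ it equals $-(B^2/4+\eta^2)$), so by \eqref{mappSQRT} $D$ is purely imaginary with $\Im D>0$; feeding this into \eqref{defZeta} and computing $\Im\zeta_-$ gives $\Im\zeta_-<0$ on both pieces. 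The maximum principle then yields $\Im\zeta_-<0$ on all of $\Omega$, i.e.\ $\zeta_-\in\C_-$.

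The boundary values \eqref{boundaryValues1} now follow. For $x\in(-1-B/2,-1)$ the purely-imaginary form of $D$ together with $(x+B+2)^2>0$ gives at once $\lim\zeta_{\pm}\in\C_{\pm}$. For $x>-1$, $D$ is real and $\zeta_{\pm}(x)$ is real, and the side of approach is $\sgn\Im\zeta_{\pm}(x+i0^+)=\sgn\zeta_{\pm}'(x)$, where implicit differentiation of $R_{A,B}(\zeta_{\pm})=0$ and $\partial_zR_{A,B}(\zeta_{\pm})=\pm8D$ give $\zeta_{\pm}'(x)=\mp\,\partial_AR_{A,B}(\zeta_{\pm})/(8D)$; for $\zeta_-$ the claim $\zeta_-\in(\R)_-$ is already contained in $\zeta_-\in\C_-$, while for $\zeta_+$ one checks that $x=0$ (where $\zeta_+=1$ and $\partial_AR_{A,B}(1)=0$) is the only critical point on $(-1,\infty)$, so $\zeta_+$ increases on $(-1,0)$ and decreases on $(0,\infty)$, yielding $(\R)_+$ and $(\R)_-$. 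I expect the genuine obstacle to be precisely these two sign computations — the inequality $\Im\zeta_-<0$ along $V$ and the monotonicity of $\zeta_+$ on $(-1,\infty)$ — since after substituting the explicit $D$ each reduces to a polynomial positivity statement that must be verified uniformly for $B>0$.
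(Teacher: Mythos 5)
Your discriminant argument for $\zeta_\pm\notin(-\infty,1]\cup[3,+\infty)$ is exactly the paper's: both rewrite $R_{A,B}(r)=0$ as a real quadratic in $A$, use the sign of its discriminant to exclude $r<1$, and use $\Re A=-(B+2)(r-1)/(r+1)>-1-B/2\iff r<3$ to exclude $r\ge 3$. Your treatment of the boundary segment $x\in(-1-B/2,-1)$ (where $D$ is purely imaginary with $\Im D>0$, so $\Im\zeta_\pm=\pm 4|D|/(x+B+2)^2$) is correct and complete, and your derivative-sign argument for $x>-1$ is in substance what the paper does too: it invokes the monotonicity of the real boundary functions $f_\pm$ from \cite{MR1805976}, calling it ``straightforward to check,'' rather than proving it, so on that point you are no worse off than the authors.

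The genuine divergence, and the genuine gap, is the claim $\zeta_-\in\C_-$. Your route --- $\Im\zeta_-$ is bounded and harmonic on $\Omega$, vanishes on the boundary ray $x\ge -1$ and at $\infty$, and is negative on the rest of $\partial\Omega$, hence negative inside by the maximum principle --- is viable, but it stands or falls on the inequality $\Im\zeta_-<0$ along the vertical ray $V=\{-1-B/2+i\eta:\eta>0\}$, which you assert and then explicitly decline to verify. This is not a routine sign check: writing $w=(A+B+2)^2=u+iv$ with $u=(1+B/2)^2-\eta^2$ and $v=\eta(B+2)$, one gets $|w|^2\,\Im\zeta_-=2uv-B^2v-4|D|u$ with $|D|^2=(B+1)(B^2/4+\eta^2)$, and its negativity for all $\eta>0$, $B>0$ needs a case analysis on the sign of $u$ (it does hold, using $2v>4|D|\iff\eta^2>B+1$ and $0<u<B^2/4$ in the intermediate regime). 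Since this is precisely the step carrying the main assertion of the lemma, leaving it as an ``expected obstacle'' is a real gap. The paper sidesteps the computation entirely: from \eqref{defZeta}, $A^2-B^2+(A+B+2)^2\zeta_-=-4D(A,B)$ has negative imaginary part by \eqref{mappSQRT}; assuming $\zeta_-$ real (hence $\zeta_->1$ by the exclusion already proved) this forces $\Im(A)\left[\Re(A)+\Re(A+B+2)\,\zeta_-\right]<0$, which contradicts $\Re(A)+\Re(A+B+2)\,\zeta_->(1+B/2)(\zeta_--1)>0$. That two-line contradiction, combined with the connectivity of $\Omega$ and the boundary correspondence, is both shorter and free of the positivity verification your version still owes; a smaller debt of the same kind remains in your (and the paper's) appeal to the monotonicity of $\zeta_+$ on $(-1,0)$ and $(0,\infty)$.
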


\begin{proof}
The polynomial $R_{A,B}$ in \eqref{defRAB} can be rewritten as
\begin{equation}
R_{A,B}(x)= \left( x+1\right)
^{2}A^{2}+2\left( B+2\right) \left( x^{2}-1\right) A+B^{2}\ \left(
x-1\right) ^{2}+4\left( x^{2}-1\right) \left( B+1\right) ;
\label{lemma2}
\end{equation}
it is a quadratic polynomial in $A$, whose discriminant  is
$$
\Delta =
 -8\left( x-1\right) \left( x+1\right) ^{2}\left( B+1\right).
$$
In particular, if $x<1$, then $\Delta>0$, so that with such $x$ the identity $R_{A,B}(x)=0$ can hold only for $A\in \R$. This proves that under assumptions \eqref{ABcond} the roots of $R_{A,B}$ cannot belong to $(-\infty, 1]$. Furthermore, if $R_{A,B}$ has a real root (hence, $>1$), by \eqref{lemma2},
$$
A=-(B+2) \frac{x-1}{x+1} + i\, \frac{  \left| \sqrt{\Delta}\right|}{(x+1)^2},
$$
and the assumption $\Re(A)>-1-B/2$ implies that $x<3$.

From the results of \cite{MR1805976} (actually, it is straightforward to check) we know that function 
$$
f_-(x)=\lim_{y\to 0+} \zeta_{-}(x+i y, B)
$$
decreases monotonically from $f_-(-1)=B-1$ to  $f_-(+\infty)=-1$ as $x$ traverses from $-1$ to $ +\infty$, while 
$$
f_+(x)=\lim_{y\to 0+} \zeta_{+}(x+i y, B)
$$
increases monotonically on $(-1,0)$, and decreases monotonically on  $(0,+\infty)$. Since $\zeta_{\pm}(z, B)$ is locally conformal,  \eqref{boundaryValues1} follows from the correspondence of boundary points.

Finally, by \eqref{defZeta},
$$
A^{2}-B^{2}+ ( A+B+2) ^{2}\zeta_-=- 4D(A, B)  .
$$ 
By \eqref{mappSQRT}, the right hand side belongs $\C_-$, so that for any  pair $(A,B)$ satisfying \eqref{ABcond}, 
$$
\Im\left( A^{2}-B^{2}+ ( A+B+2) ^{2}\zeta_-\right) = \Im \left( A^{2} + ( A+B+2) ^{2}\zeta_-\right)<0.
$$
Assuming that for certain $(A, B)$ satisfying \eqref{ABcond}, the root $\zeta_-=\zeta_-(A, B)\in \R$, and thus, $\zeta_->1$, it follows that 
$$
\Im\left( A^{2}\right) + \Im\left(( A+B+2) ^{2}\right) \zeta_- <0,
$$
or equivalently,
\begin{equation}
\label{conditionOnzeta}
  \Im\left( A \right)  \left [  \Re\left( A \right)+   \Re\left( A+B+2  \right) \zeta_- \right]  <0.
\end{equation}
However, $ \Im\left( A \right)>0$ and since $\Re(A)>-1-B/2$ and $\zeta_->1$,
$$
 \Re\left( A \right)+   \Re\left( A+B+2  \right) \zeta_- > \left( 1+\frac{B}{2}\right) \left(  \zeta_--1\right)>0,
$$
which yields a contradiction with \eqref{conditionOnzeta}. This proves that for $(A, B)$ satisfying \eqref{ABcond}, $\zeta_-\notin \R$, and thus, $\zeta_-\in \C_-$.
\end{proof}

\section{Domain configuration of $\varpi_{A,B}$} \label{section_trajectories}

Recall that the \emph{horizontal trajectories} (or just trajectories) of $\varpi_{A,B}$ are the loci of
the equation 
\begin{equation*}
\Re \int^{z}  \frac{\sqrt{R_{A,B}(t)}}{t^2-1}\, dt \equiv \const,  
\end{equation*}%
while the \emph{vertical} or \emph{orthogonal} trajectories are obtained by
replacing $\Re $ by $\Im $ in the equation above. The trajectories and the orthogonal trajectories of $\varpi_{A,B}$ produce a transversal foliation of the Riemann sphere $\overline \C$.

A trajectory $\gamma$  of $\varpi_{A,B}$ starting and ending at $\zeta_\pm$ (if exists) is called \emph{finite critical} or \emph{short}; if it starts at one of the zeros $\zeta_\pm$ but tends to either pole, we call it \emph{infinite critical trajectory} of $\varpi_{A,B}$. In a slight abuse of terminology, we say that such an infinite critical trajectory, if it exists,  \emph{joins} the zero with the corresponding pole. 
Since $\varpi_{A,B}$ has only three poles, Jenkins' three pole Theorem \cite{Jenkins:1972aa} asserts
that it cannot have any recurrent trajectory.

The set of both finite and infinite critical trajectories of $\varpi_{A,B}$ together with their limit points (critical points of $\varpi_{A,B}$) is the \emph{critical graph} $\Gamma_{A,B}$ of $\varpi_{A,B}$.

According to \cite[Theorem 3.5]{MR0096806} (see also \cite[\S 10]{Strebel:84}), the complement of the closure of $\Gamma_{A,B}$ in $\overline{\C} $ consists of a finite number of domains called the \emph{domain configuration} of $\varpi_{A,B}$. Among the possible types of domains there are the so-called circle and strip domains. A \emph{circle domain} $\mathfrak C$ of $\varpi_{A,B}$ is a maximal simply connected domain swept out by regular closed trajectories of $\varpi_{A,B}$ surrounding a double pole that is the only singularity of $\varpi_{A,B}$ in $\mathfrak C$. A \emph{strip domain} or a \emph{digon} $\mathfrak S$ of $\varpi_{A,B}$ is a maximal simply connected domain swept out by regular trajectories of $\varpi_{A,B}$, each diverging to a double pole in both directions; these double poles must represent distinct boundary points of $\mathfrak S$ (see \cite{Solynin:2009aa}).

The main result  of this section is the following theorem, which describes the critical graph as well as the domain configuration of $\varpi_{A,B}$ (see Figure~\ref{fig:globalstructure}).
\begin{theorem} \label{thm:1}
Let $A \notin  \R$ and $B>0$. Then  there exists a short trajectory $\gamma_{A,B}$ of $\varpi_{A,B}$, joining $\zeta_-$ and $\zeta_+$. 
This trajectory is unique, homotopic in the punctured plane $\C\setminus \{-1, 1\} $ to a Jordan arc connecting $\zeta_\pm$ in $\C\setminus (-\infty, 1] $.

Furthermore,  the structure of the critical graph $\Gamma_{A,B}$ of $\varpi_{A,B}$ is as follows:
\begin{itemize}
\item the short trajectory $\gamma_{A,B}$ of $\varpi_{A,B}$, joining $\zeta_-$ and $\zeta_+$;
\item  the  unique finite critical trajectory $\sigma_{-}$ of $\varpi_{A,B}$ emanating from   $\zeta_-$ and forming a closed loop, encircling $-1$;
\item the  infinite critical trajectory $\sigma_+$, emanating from   $\zeta_+$ and diverging towards $1$;
\item the  infinite critical trajectory $\sigma_\infty$, emanating from   $\zeta_+$ and diverging towards $\infty$.
\end{itemize}
$\Gamma_{A,B}$ splits $\C$ into two connected domains: the bounded circle domain $\mathfrak C$  with center at $-1$, and an unbounded strip domain $\mathfrak S$, whose boundary points are $1$ and $\infty$. 
\end{theorem}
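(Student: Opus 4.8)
The plan is to determine the global topology of the critical graph $\Gamma_{A,B}$ by combining the local information at the critical points (already encoded in \eqref{localpoles2} and Lemma~\ref{beh of zeros}) with the global constraints imposed by Jenkins' three pole theorem and by the Teichm\"uller/Euler-type count for the number of trajectories emanating from each critical point. Since each simple zero $\zeta_\pm$ of $R_{A,B}$ is a simple zero of the quadratic differential, exactly three horizontal trajectories emanate from it; each double pole at $\pm 1$ and at $\infty$ has a well-defined local structure determined by the leading coefficients $-4A^2$, $-4B^2$, $-(A+B+2)^2$ in \eqref{localpoles2}. Because $A\notin\R$ these coefficients are non-real, so near each double pole the trajectories spiral (rather than forming closed loops or radial ends), which will be the crucial mechanism forcing at most finitely many critical trajectories to actually reach a given pole.

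First I would analyze the possible destinations of the six trajectory-ends emanating from $\zeta_+$ and $\zeta_-$. By Jenkins' theorem there are no recurrent trajectories, so every critical trajectory either is a short trajectory joining $\zeta_-$ to $\zeta_+$, or diverges to one of the three poles. I would rule out self-connections that are topologically impossible by a homotopy/winding argument based on the single-valued branch of $\int^z \sqrt{R_{A,B}(t)}/(t^2-1)\,dt$ and the behavior of its real part near each pole. The key quantitative input is the period computation: integrating $\sqrt{R_{A,B}}/(t^2-1)$ around small loops encircling $\pm1$ and $\infty$ gives residues proportional to $A$, $B$, $A+B+2$ respectively, and the non-reality of $A$ (hence of the residue at $1$ and at $\infty$) is exactly what makes a circle domain around $1$ or around $\infty$ impossible while permitting one around $-1$ only if the corresponding period is purely imaginary --- which, by Lemma~\ref{beh of zeros} placing $\zeta_-\in\C_-$, I would show is consistent with a closed loop $\sigma_-$ encircling $-1$.

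The heart of the argument is then a \emph{continuity/deformation} scheme in the parameter $A$. I would fix $B>0$ and deform $A$ within the region \eqref{ABcond}, starting from a base case where the structure is known (for instance the real-parameter limit analyzed in \cite{MR1805976}, or a degenerate configuration where a short trajectory is explicit), and track how the critical graph varies. The configuration can only change at parameter values where a saddle connection degenerates, i.e.\ where a critical trajectory passes through the other zero or where the $\Re$-period between the two zeros crosses zero; away from such values the combinatorial type is locally constant. I would show that the condition defining the short trajectory $\gamma_{A,B}$ --- that $\Re\int_{\zeta_-}^{\zeta_+}\sqrt{R_{A,B}}/(t^2-1)\,dt=0$ along an appropriate path --- is met on a connected set throughout \eqref{ABcond}, using the monotonicity of $f_\pm$ from Lemma~\ref{beh of zeros} to prevent the zeros from colliding with the poles or crossing the cuts. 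Uniqueness of $\gamma_{A,B}$, and the absence of any \emph{second} short trajectory, follows because two short trajectories would bound a domain whose boundary consists only of trajectory arcs with no pole inside, contradicting the domain classification of \cite{MR0096806}.

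I expect the main obstacle to be establishing the global homotopy class of $\gamma_{A,B}$ and showing it does \emph{not} cross $(-\infty,1]$, together with the correct assignment of the three infinite trajectory-ends (namely that $\sigma_+$ goes to $1$, $\sigma_\infty$ to $\infty$, and that the third end from $\zeta_-$ closes up into the loop $\sigma_-$ around $-1$ rather than escaping to a pole). This is genuinely global and cannot be read off the local data alone; it requires controlling the real part of the (multivalued) integral along unbounded arcs and excluding, via the three-pole theorem and the spiraling behavior at the non-real double poles, the competing configurations in which, say, both free ends of $\zeta_+$ diverge to the same pole or in which $-1$ sits in a strip domain rather than a circle domain. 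The final bookkeeping --- that $\Gamma_{A,B}$ separates $\C$ into exactly the circle domain $\mathfrak C$ centered at $-1$ and the strip domain $\mathfrak S$ with boundary $\{1,\infty\}$ --- is then forced by Euler-characteristic/valence counting once every critical trajectory end has been accounted for.
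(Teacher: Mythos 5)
Your overall strategy matches the paper's: local analysis at the critical points, Jenkins' three-pole theorem to exclude recurrence, Teichm\"uller-lemma exclusions of certain saddle connections (the paper's Proposition~\ref{lemma:lemma31}), and a deformation in $A$ from the known real-parameter configuration of \cite{MR1805976}. However, two of the steps where you commit to a specific mechanism do not work as stated. First, your uniqueness argument --- that two short trajectories would bound a domain ``with no pole inside'', contradicting the domain classification --- is false as written: two short trajectories both joining $\zeta_-$ and $\zeta_+$ can perfectly well bound the circle domain containing the double pole $-1$. This is exactly the paper's competing configuration (a) for $\partial\mathfrak{C}$, and it is \emph{not} excluded by any local or Teichm\"uller-type count; the paper rules it out only by the continuity argument (the zeros stay simple on the closure $\overline\Omega$ of the parameter region, Proposition~\ref{lemma:lemma31} prevents the combinatorial type from jumping, and the real-parameter case is of type (b)). You need to make the deformation argument carry this load explicitly rather than appealing to a pole-free digon.

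Second, your existence mechanism --- that $\Re\int_{\zeta_-}^{\zeta_+}\sqrt{R_{A,B}}/(t^2-1)\,dt=0$ ``is met on a connected set'' --- treats a necessary condition as sufficient. The vanishing of the real part of a period along some homotopy class does not by itself produce a short trajectory in that class: the trajectory leaving $\zeta_-$ can still diverge to a pole while the period is computed along a homotopic non-trajectory path. (Indeed, by Proposition~\ref{premier} the period in the relevant class is automatically $\pm 2\pi i$, purely imaginary, so this condition carries no information.) The paper's existence proof is structural: since $B>0$ is real, the leading coefficient $-4B^2$ at $-1$ forces closed trajectories there, hence a circle domain $\mathfrak{C}$ whose boundary must consist of critical trajectories not passing through $1$ or $\infty$, i.e.\ of short trajectories; once case (a) is excluded, $\partial\mathfrak{C}$ is a loop $\sigma_-$ through one zero, and the third trajectory end at that zero must be short, because otherwise all three ends at the other zero would diverge to $\{1,\infty\}$ and two of them would share a pole, contradicting Proposition~\ref{lemma:lemma31}. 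You should replace your period-based existence step with this circle-domain/pigeonhole argument (or supply an independent proof that the vanishing real period forces a saddle connection, which is not true in general).
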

In other words, we claim that the critical graph of $\varpi_{A,B}$ is made of 2 short and 2 infinite critical trajectories. 
Recall that it is sufficient to analyze the case when $(A,B)$ satisfy assumptions \eqref{ABcond}.

\begin{figure}[htb]
\centering \begin{overpic}[scale=0.7]{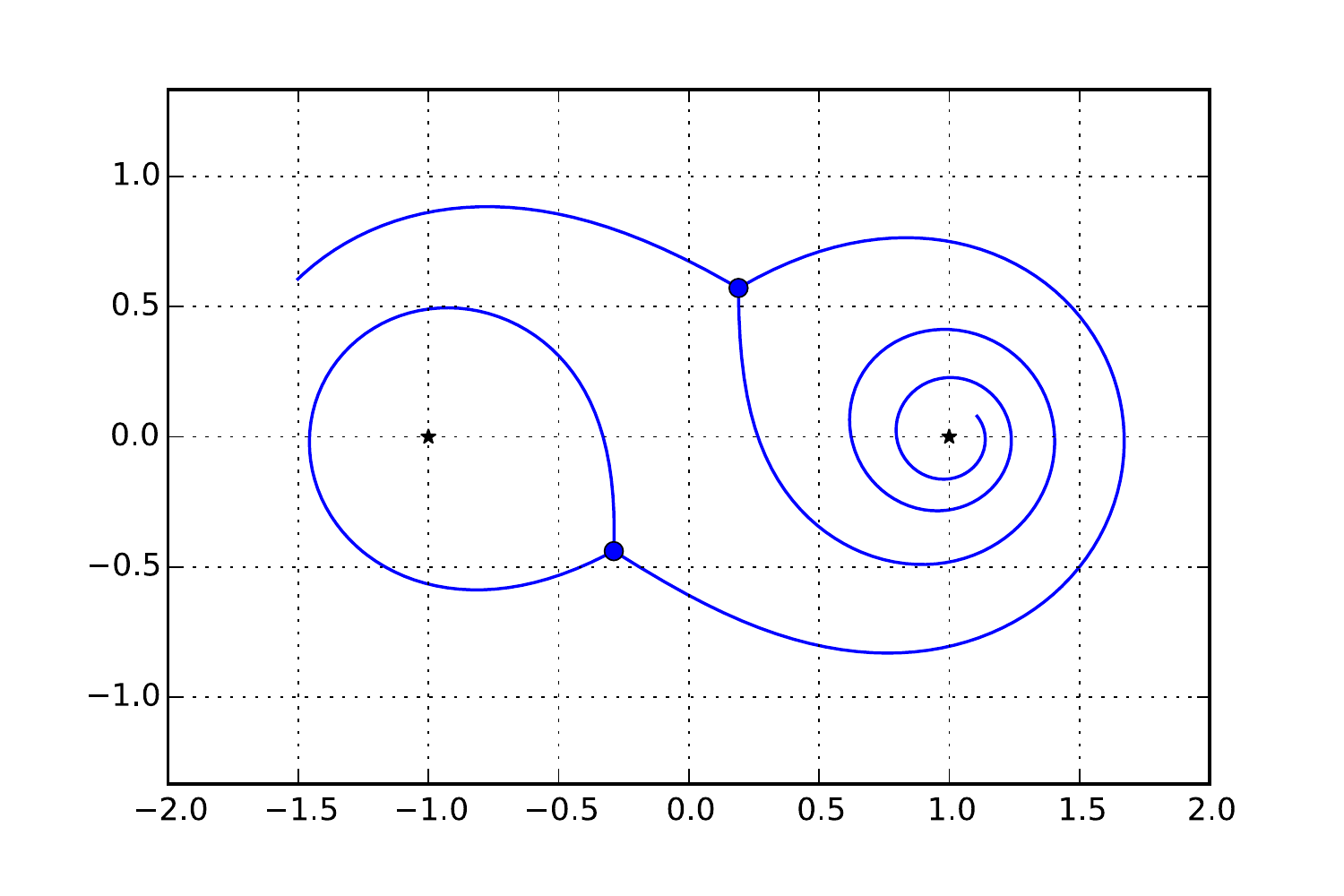}%
\put(29,31){\small $-1 $}
\put(37,29){\small $\mathfrak C$}
\put(40,45){\small $\mathfrak S$}
\put(68,33){\small $1 $}
\put(44,22){\small $\zeta_- $}
\put(54,48){\small $\zeta_+$}
\put(75,48){ $\gamma_{A,B}$}
\put(39,37){\small $\sigma_-$}
\put(55,28){\small $\sigma_+$}
\put(35,53){\small $\sigma_\infty$}
\end{overpic}
\caption{Typical structure of the critical graph $\Gamma_{A,B}$ for the trajectories   of $\varpi_{A,B}$ under the assumptions \eqref{ABcond}. These trajectories are depicted for $A=  -1.1 + 0.1i$ and $ B= 1$.}
\label{fig:globalstructure}
\end{figure}

In order to prove Theorem~\ref{thm:1} we start from the local structure of the trajectories of  $\varpi_{A,B}$ at its critical points (see e.g.~\cite{MR0096806,Pommerenke75,Strebel:84,MR1929066}). Recall that at any regular point the trajectories are locally simple analytic arcs passing through this point, and through every regular point of $\varpi_{A,B}$ passes a uniquely determined horizontal and uniquely determined vertical trajectory, mutually orthogonal at this point \cite[Theorem 5.5]{Strebel:84}. Furthermore,  there are $3$ trajectories emanating from $\zeta_\pm$ under equal angles $2\pi/3$.

By \eqref{localpoles2} we conclude that the trajectories are closed Jordan curves in a neighborhood of $-1$, and the radial or the log-spiral form at $1$ and $\infty$. The radial structure at $1$ occurs if $A\in i\R$, and at infinity, when $A+B+2\in i\R$.

Let $\gamma$ be a Jordan arc in $\C\setminus \{-1,1\}$ joining $\zeta_-$ and $\zeta_+$. Then in $\C\setminus \gamma$ we can fix a single-valued branch of $\sqrt{R_{A,B} }  $ by requiring that
\begin{equation}
\lim_{z\to \infty }\frac{\sqrt{R_{A,B}( z) }}{z} =A+B+2.  \label{asymptInfity}
\end{equation}%
Clearly, conditions
\begin{equation}
\label{eq:cond}
\sqrt{R_{A,B} ( 1) }  =   2A, \quad 
\sqrt{R_{A,B}  ( -1) }  =  -2B
\end{equation}
determine uniquely the homotopy class of $\gamma$ in the punctured plane $\C\setminus \{-1,1\}$. 
We have,
\begin{proposition}
\label{premier} Let $A, B$ satisfy assumptions \eqref{ABcond}, and let $\gamma$ be a Jordan arc in $\C\setminus \{-1,1\}$ joining $\zeta_-$ and $\zeta_+$, and $\sqrt{R_{A,B} }  $ is its single-valued branch in $\C\setminus \gamma$ fixed   by the condition \eqref{asymptInfity}. Then 
\begin{equation} \label{firstInt}
\int_{\gamma }\frac{(\sqrt{R_{A,B}\left( t\right) })_+}{t^{2}-1}dt\in \pm 2\pi
i\left\{ 1,\left( A+1\right) ,\left( B+1\right) ,\left( A+B+1\right)
\right\} ,
\end{equation}%
where $(\sqrt{R_{A,B}\left( t\right) })_+$ is the boundary value on one of the sides of $\gamma$. 

Moreover, the integral in the left hand side of \eqref{firstInt} takes the value $\pm 2\pi i$ if and only if  $\gamma $ is such that conditions \eqref{eq:cond} are satisfied.
\end{proposition}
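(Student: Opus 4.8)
The plan is to evaluate the contour integral in \eqref{firstInt} by a residue computation, exploiting that $\sqrt{R_{A,B}}$ becomes single-valued on the two-sheeted Riemann surface associated with $R_{A,B}$, and that the integrand is a rational differential with poles only at $\pm 1$ and $\infty$. Let me sketch this carefully.

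First I would introduce the one-form
\[
\omega = \frac{\sqrt{R_{A,B}(t)}}{t^2-1}\, dt
\]
on the elliptic-like curve $\mathcal R = \{(t,w): w^2 = R_{A,B}(t)\}$ — though since $R_{A,B}$ is quadratic, $\mathcal R$ is actually a rational (genus-zero) curve, which is the key simplification. The cut $\gamma$ joins the two branch points $\zeta_\pm$, so on $\overline{\C}\setminus\gamma$ the branch fixed by \eqref{asymptInfity} is well defined. The integral $\int_\gamma \omega_+$ is, by the standard argument, one half of the integral of $\omega$ around a loop encircling $\gamma$ (the difference of the two boundary values $\omega_+ - \omega_- = 2\omega_+$ since the branch flips sign across the cut). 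That loop can be deformed, on $\overline\C\setminus\gamma$, into a sum of small loops around the poles $t=1$, $t=-1$, and $t=\infty$. Thus the integral reduces to $\pm 2\pi i$ times a signed sum of residues of $\omega$ at these three points.

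Next I would compute the three residues. Near $t=1$ the branch satisfies $\sqrt{R_{A,B}(1)}=\pm 2A$ (this is exactly the sign ambiguity recorded in \eqref{eq:cond}, depending on the homotopy class of $\gamma$), so $\operatorname{res}_{t=1}\omega = \tfrac{1}{2}\sqrt{R_{A,B}(1)} = \pm A$; similarly $\operatorname{res}_{t=-1}\omega = -\tfrac{1}{2}\sqrt{R_{A,B}(-1)} = \pm B$; and from \eqref{asymptInfity} the residue at infinity contributes $\mp(A+B+2)$ up to sign, while the ``$1$'' in the bracket of \eqref{firstInt} comes from the total residue being computed in $\overline\C$ (the residues of a rational differential summing to zero gives one linear relation, which is why only the combinations $1,\ A+1,\ B+1,\ A+B+1$ can occur). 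Combining the residues with the four possible sign patterns determined by the homotopy class of $\gamma$ yields precisely the four values $\pm 2\pi i\{1, A+1, B+1, A+B+1\}$ listed. For the moreover clause, I would observe that the branch-value conditions $\sqrt{R_{A,B}(1)}=2A$ and $\sqrt{R_{A,B}(-1)}=-2B$ in \eqref{eq:cond} fix the signs of the residues at $\pm 1$ to be $+A$ and $+B$; tracking these through the residue sum forces the integral to equal exactly $+2\pi i$ (or its negative, depending on orientation of $\gamma$), and conversely any other homotopy class flips one of the sign choices and lands on one of the other three values. This is the if-and-only-if: the value $\pm 2\pi i$ singles out the specific homotopy class compatible with \eqref{eq:cond}.

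\textbf{The main obstacle} I anticipate is the careful bookkeeping of signs and branch choices: both the orientation of $\gamma$ (hence the overall $\pm$), the side on which the boundary value $(\sqrt{R_{A,B}})_+$ is taken, and the homotopy-class-dependent sign of $\sqrt{R_{A,B}(\pm 1)}$ all interact. The genuinely nontrivial point is to show that as $\gamma$ ranges over \emph{all} homotopy classes in $\C\setminus\{-1,1\}$, the integral can only realize the four listed combinations and no others — this requires understanding how the branch of $\sqrt{R_{A,B}}$, and therefore the triple of residues at $1,-1,\infty$, is constrained. Concretely, changing the homotopy class by a loop around $1$ or around $-1$ multiplies the relevant local branch by $-1$, which flips the corresponding residue's sign; since the total residue sum is fixed by $\omega$ being a rational differential on $\overline\C$, only finitely many sign patterns are consistent, and one checks these give exactly the enumerated values. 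I would then verify the final equivalence by direct substitution of the sign choice dictated by \eqref{eq:cond}.
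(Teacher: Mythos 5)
Your proposal is correct and follows essentially the same route as the paper: rewrite the one-sided integral as half of a loop integral around the cut, deform the loop to the poles $1$, $-1$, $\infty$, and enumerate the four sign patterns of $\sqrt{R_{A,B}(\pm 1)}$ determined by the homotopy class of $\gamma$, the branch at infinity being pinned down by \eqref{asymptInfity} so that the residue there is always $-(A+B+2)$. (One aside is slightly off: the restriction to exactly four values follows simply from the two binary sign choices at $\pm 1$, not from any vanishing of a total residue sum, since $\sqrt{R_{A,B}(t)}\,dt/(t^2-1)$ is not a rational differential on $\overline{\C}$; this does not affect the correctness of the computation.)
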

\begin{proof}
By the properties of the square root, the integral in the left hand side of \eqref{firstInt} can be written as
$$
\frac{1}{2} \oint_{\gamma }\frac{\sqrt{R_{A,B}\left( t\right) } }{t^{2}-1}dt,
$$
which can be calculated using the residues of the integrand at $\pm 1$ and $\infty$. Thus,
\begin{align*}
\frac{1}{2} \oint_{\gamma }\frac{\sqrt{R_{A,B}\left( t\right) } }{t^{2}-1}dt & = 
 \pm  i\pi \left( \underset{-1}{\res} +\underset{1}{\res}  +\underset{\infty }{\res} \right) \left( 
\frac{\sqrt{R_{A,B} \left( t\right)}}{t^{2}-1}\right)  \\
&= \pm i\pi \left(   \frac{\sqrt{R_{A,B} ( -1)}}{-2} + \frac{\sqrt{R_{A,B} ( 1)}}{2}%
-\left( A+B+2\right) \right)  \label{poss} \\
&=  \pm 2\pi i\left\{ 1,\  A+1  , B+1  , 
A+B+1  \right\}.
\end{align*}
\end{proof}

As it will be seen in Section~\ref{section:Jacobi}, the short trajectory $\gamma$, joining the zeros $\zeta_\pm$, beings the carrier of the asymptotic zero distribution of the Jacobi polynomials, must satisfy
$$
\int_{\gamma }\frac{(\sqrt{R_{A,B}\left( t\right) })_+}{t^{2}-1}dt = \pm 2\pi i.
$$
By the proof of Proposition~\ref{premier}, this is equivalent to conditions \eqref{eq:cond}. So, we need to establish the homotopic class of curves for which conditions~\eqref{eq:cond} are satisfied. According to Proposition~\ref{lemma:lemma31} below, there cannot exist a trajectory passing through either pole $\pm 1$ and joining both zeros $\zeta_\pm$. This shows that the homotopic class of curves within the domain $(A,B)$ given by assumptions \eqref{ABcond} remains invariant, and it is sufficient to analyze the limit case $B>0$, $-1-B/2<A<-1$, for which, by Lemma \ref{beh of zeros}, $\zeta_\pm\in \C_\pm$. By \eqref{asymptInfity}--\eqref{eq:cond},
\begin{align*}
&\lim_{z\to \infty }\frac{\sqrt{R_{A,B}( z) }}{z}  =A+B+2 >0, \\
& \sqrt{R_{A,B} ( 1)}    =   2A < 0, \quad \sqrt{R_{A,B} ( -1)}    =  - 2B< 0,
\end{align*}
which shows that $\gamma$ cuts $\R$ at some point $x>1$. We conclude that
\begin{proposition}
\label{homotopy} 
Under assumptions \eqref{ABcond}, Jordan arcs $\gamma$ joining $\zeta_-$ and $\zeta_+$, and such that conditions~\eqref{eq:cond} are satisfied, are homotopic in the punctured plane $\C\setminus \{-1, 1\} $ to a Jordan arc connecting $\zeta_\pm$ in $\C\setminus (-\infty, 1] $.
\end{proposition}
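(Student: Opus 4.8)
The plan is to reduce the full range \eqref{ABcond} to the real one‑parameter family $A=x\in(-1-B/2,-1)$, $B>0$ --- where $R_{A,B}$ has real coefficients and an explicit sign analysis is available --- and then to carry the conclusion back by a continuity argument. The point of departure, already recorded after \eqref{asymptInfity}, is that for every admissible $(A,B)$ the two requirements \eqref{eq:cond} determine a \emph{unique} homotopy class of arcs $\gamma$ joining $\zeta_\pm$ in $\C\setminus\{-1,1\}$; hence it suffices to identify that class.

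First I would argue that the class does not depend on $(A,B)$ over the connected region \eqref{ABcond}. By Lemma~\ref{beh of zeros} the endpoints $\zeta_\pm$ vary continuously and stay in $\C_-$, resp.\ off $(-\infty,1]\cup[3,+\infty)$, so they remain bounded away from the punctures $\pm1$; the prescribed values $2A$ and $-2B$ in \eqref{eq:cond} are also continuous and never zero. Consequently $(A,B)\mapsto[\gamma]$ is locally constant, and the only mechanism that could make it jump along a path in \eqref{ABcond} is the collision of a representative with a pole. Proposition~\ref{lemma:lemma31} excludes precisely this degeneration, so the class is constant on \eqref{ABcond} and passes to the limit on the boundary segment $\{B>0,\ -1-B/2<A<-1\}$, where $\zeta_+\in\C_+$ and $\zeta_-\in\C_-$ by \eqref{boundaryValues1}.

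On that segment $A=x$ is real, so $R_{A,B}$ has real coefficients, positive leading coefficient $(A+B+2)^2$, and conjugate non‑real zeros $\zeta_-=\overline{\zeta_+}$; therefore $R_{A,B}>0$ on all of $\R$. I would then fix a Jordan arc $\gamma_0\subset\C\setminus(-\infty,1]$ joining $\zeta_+$ to $\zeta_-$ and meeting $\R$ at a single point $x_0>1$, and choose the branch of $\sqrt{R_{A,B}}$ in $\C\setminus\gamma_0$ normalized by \eqref{asymptInfity}. Since $\sqrt{R_{A,B}(x)}\sim(A+B+2)\,x>0$ as $x\to+\infty$, this branch is positive on $(x_0,+\infty)$; because the two boundary values of $\sqrt{R_{A,B}}$ across the cut differ only by a sign, the portions of $\R$ just to the left and just to the right of $x_0$ lie on opposite banks of $\gamma_0$, and $R_{A,B}$ has no real zero, the branch must be negative throughout the component $(-\infty,x_0)$, which contains both $\pm1$. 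Evaluating there gives $\sqrt{R_{A,B}(1)}=2A$ and $\sqrt{R_{A,B}(-1)}=-2B$, so $\gamma_0$ satisfies \eqref{eq:cond}; by the uniqueness recalled above, the sought class is that of $\gamma_0$, an arc in $\C\setminus(-\infty,1]$, and by the first step the same holds for every $(A,B)$ obeying \eqref{ABcond}.

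The step I expect to be the main obstacle is the invariance claim: promoting ``locally constant'' to ``globally constant'' requires that no representative be pinched against a puncture as $(A,B)$ sweeps \eqref{ABcond}, and this is exactly where Proposition~\ref{lemma:lemma31} is indispensable. The reduction itself is unavoidable, since for non‑real $A$ the polynomial $R_{A,B}$ is no longer real on $\R$, so the transparent sign bookkeeping of the third paragraph breaks down off the real slice and cannot be used directly.
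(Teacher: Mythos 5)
Your argument is essentially the paper's own: both reduce to the boundary slice $B>0$, $-1-B/2<A<-1$ via the invariance of the homotopy class over the connected region \eqref{ABcond} (justified by Proposition~\ref{lemma:lemma31}), and then identify the class by a sign analysis of the branch of $\sqrt{R_{A,B}}$ on the real axis, forcing the crossing point to lie in $(1,\infty)$. Your write-up merely makes the final sign bookkeeping (positivity of $R_{A,B}$ on $\R$, the sign flip across the single crossing $x_0>1$) more explicit than the paper's two-line version, so there is nothing to add.
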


Another tool needed to finish the proof of Theorem~\ref{thm:1} is the following result:
\begin{proposition} \label{lemma:lemma31}
Under assumptions \eqref{ABcond}, 
\begin{enumerate}
\item[(i)] There cannot exist two infinite critical trajectories
emanating from the zeros of $R_{A,B}$ and diverging to the pole at $z=1$.
\item[(ii)] There cannot exist two infinite critical trajectories
emanating from the same zero of $R_{A,B}$ and diverging to $\infty $.
\end{enumerate}
\end{proposition}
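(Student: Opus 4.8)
The plan is to derive a contradiction from a single \emph{residue--angle identity}, obtained by integrating the differential $\omega=\frac{\sqrt{R_{A,B}(z)}}{z^2-1}\,dz$ (with the branch fixed by \eqref{asymptInfity}) along the boundary of the domain cut out by the hypothetical critical trajectories. Two facts drive the argument. First, along any horizontal trajectory of $\varpi_{A,B}$ one has $\omega\in i\R$, so the integral of $\omega$ over any finite union of trajectory arcs is purely imaginary. Second, exactly as in the proof of Proposition~\ref{premier}, the conditions \eqref{eq:cond} and \eqref{asymptInfity} give the residues of $\omega$ at the three double poles as
\[
\res_{1}\omega=A,\qquad \res_{-1}\omega=B,\qquad \res_{\infty}\omega=-(A+B+2),
\]
so that near a double pole $p$ one has $G(z):=\int^{z}\omega=(\res_p\omega)\log(z-p)+O(1)$, and a small circular arc of angular width $\Delta\theta$ about $p$ contributes $i\,(\res_p\omega)\,\Delta\theta+o(1)$ to $\oint\omega$. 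The crucial arithmetic input, valid under \eqref{ABcond}, is that $\res_1\omega=A\notin\R$ and $\res_\infty\omega=-(A+B+2)\notin\R$ (both have imaginary part $\pm\Im A\neq0$), while $\res_{-1}\omega=B\in\R$.

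For part (ii), suppose two infinite critical trajectories $\ell_1,\ell_2$ emanate from a single zero $\zeta\in\{\zeta_+,\zeta_-\}$ and both diverge to $\infty$. Since the three separatrices leave $\zeta$ at equal angles $2\pi/3$, the arcs $\ell_1,\ell_2$ together with the vertices $\zeta$ and $\infty$ bound a simply connected $\varpi_{A,B}$-polygon $\Omega$. I would integrate $\omega$ over $\partial\Omega$, capping the two vertices by small arcs: the arc at $\zeta$ contributes $o(1)$ (the integrand is bounded there), the trajectory arcs contribute a purely imaginary amount, and the cap at $\infty$ contributes $i\,\res_\infty\omega\cdot\Delta\theta$. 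By the residue theorem the total equals $2\pi i$ times the sum of the residues enclosed by $\Omega$. Taking \emph{real} parts kills the trajectory contributions and leaves $\Delta\theta\,\Im A$ on the left and $-2\pi(\Im A)\cdot(\text{number of times }z=1\text{ is enclosed})$ on the right (the pole $-1$ has real residue and contributes nothing real). Since $\Im A\neq0$, this pins $\Delta\theta$ to $0$ or $\pm2\pi$, either of which forces $\ell_1=\ell_2$, a contradiction with $0<\Delta\theta<2\pi$.

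For part (i), suppose an infinite critical trajectory runs $\zeta_+\to1$ and another runs $\zeta_-\to1$. If both emanate from the \emph{same} zero, the verbatim argument of (ii) applies with the pole $z=1$ (whose residue $A$ is non-real) and forces $A\in\R$. If they emanate from \emph{different} zeros, the two arcs form a single Jordan arc $\Gamma:\zeta_+\to1\to\zeta_-$ through the pole; I would close it into the boundary of a domain having $z=1$ as a boundary vertex (using another separatrix or a crosscut), and run the same real-part computation, again reaching $A\in\R$. The enclosed-pole bookkeeping is settled using Lemma~\ref{beh of zeros}, which places $\zeta_-\in\C_-$ and $\zeta_+\notin(-\infty,1]\cup[3,+\infty)$, together with the local circle structure at $-1$ read off from \eqref{localpoles2}.

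The main obstacle is not the final arithmetic but the topology: turning the hypothetical separatrices into the boundary of a genuine $\varpi_{A,B}$-polygon and identifying exactly which poles it encloses — in particular the different-zero case of (i), where the two trajectories do not by themselves bound a domain — and orienting the branch cut of $\sqrt{R_{A,B}}$ (with branch points at $\zeta_\pm$) so that the residue theorem is applied on the correct sheet. Once the set of enclosed residues is pinned down, the contradiction is automatic, since $\res_1\omega$ and $\res_\infty\omega$ are non-real whereas a genuine interior angle at the pole lies strictly in $(0,2\pi)$.
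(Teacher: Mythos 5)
The paper does not actually spell out a proof of Proposition~\ref{lemma:lemma31}: it invokes Teichm\"uller's lemma \cite[Theorem 14.1]{Strebel:84} and defers to \cite[Lemma 4]{Atia:2014JMAA}. Your residue--angle identity is a legitimate and closely related alternative (both are boundary identities for a hypothetical $\varpi$-polygon), and for part (ii), as well as for the sub-case of (i) in which both trajectories emanate from the \emph{same} zero, your argument is essentially complete. Two points you assert deserve explicit justification but are true: (a) one must choose, among the two complementary Jordan domains of $\ell_1\cup\ell_2\cup\{\zeta,p\}$, the one \emph{not} containing the other zero, so that $\sqrt{R_{A,B}}$ is single-valued there and the residue theorem applies; (b) even though the trajectories spiral into the double pole, each eventually meets every small circle about the pole exactly once (logarithmic spirals are radially monotone), so the capping arc really does have angular width $\Delta\theta\in(0,2\pi)$ and your dichotomy $\Delta\theta\in\{0,\pm 2\pi\}$ is a genuine contradiction. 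With $\Im(\res_1\omega)=\Im A\neq 0$ and $\Im(\res_\infty\omega)=-\Im A\neq0$, while $\res_{-1}\omega=B\in\R$, the bookkeeping closes in both of these cases.

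The genuine gap is the remaining sub-case of (i), where $\ell_1$ runs from $\zeta_+$ to $1$ and $\ell_2$ from $\zeta_-$ to $1$. Here $\Gamma=\ell_1\cup\{1\}\cup\ell_2$ is an arc, not a closed curve, and neither of your proposed closures works as stated: a generic crosscut from $\zeta_-$ to $\zeta_+$ contributes an uncontrolled real part to $\oint\omega$, and ``another separatrix'' is not known at this stage to exist or to land where you need it (that is precisely what Theorem~\ref{thm:1} will establish \emph{after} this proposition). Worse, the most natural closure --- integrating $\omega$ over the boundary of $\overline\C\setminus\Gamma$ --- appears to give no contradiction if one treats the two arcs of the circle $|z-1|=\epsilon$ lying in the two spiral sectors as carrying the same value of $\sqrt{R_{A,B}}$: their contributions then sum to a full residue $-2\pi i A$, which exactly matches $2\pi i(\res_{-1}\omega+\res_\infty\omega)$, and the angle drops out. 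The missing observation is that $\Gamma$ joins the two branch points $\zeta_\pm$ \emph{through} the pole $z=1$, hence is itself a branch cut for $\sqrt{R_{A,B}}$ on the simply connected domain $\overline\C\setminus\Gamma$; consequently the single-valued branch on that domain restricts to \emph{opposite} local branches on the two spiral sectors at $z=1$. The two circular arcs, of widths $\delta$ and $2\pi-\delta$, then contribute $\pm iA(2\delta-2\pi)+o(1)$ rather than $-2\pi iA$, and taking real parts of the residue identity yields $2\delta-2\pi=\pm2\pi$, i.e.\ $\delta\in\{0,2\pi\}$ --- the desired contradiction. Without this branch-tracking step (or, alternatively, without reverting to Teichm\"uller's lemma as the paper does), part (i) is not proved in the case you yourself single out as the main obstacle.
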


Its proof is based on  the so-called Teichm\"{u}ller lemma (see   \cite[Theorem 14.1]{Strebel:84}) and follows literally the arguments that have been used in  \cite[Lemma 4]{Atia:2014JMAA}. We omit repeating them here for the sake of brevity.

\medskip

Let us establish the structure of the critical graph $\Gamma_{A,B}$. Under the assumptions \eqref{ABcond}, $z=-1$ is the center of a circle domain $\mathfrak C$, whose boundary, $\partial\mathfrak{C}$, is made of critical trajectories. Since $1, \infty \notin \mathfrak C$, we conclude that $\partial \mathfrak C$ is made of short critical trajectories. Hence, a priori there are two possibilities:
\begin{enumerate}
\item[(a)] either $\partial \mathfrak{C}$ is made of two short trajectories, both connecting $\zeta_-$ and $\zeta_+$, or
\item[(b)] $\partial \mathfrak{C}$ is a single closed critical trajectory passing either through $\zeta_-$ or $\zeta_+$.
\end{enumerate}

For a fixed $B>0$ let $\overline \Omega$ be the closure of the domain defined by the conditions \eqref{ABcond} in the $A$-plane. Observe that the origin does not belong to the image of $\overline \Omega$ by the mapping \eqref{defDsqrt}--\eqref{mappSQRT}, which means that $\zeta_\pm$ are simple in the whole $\overline \Omega$. A consequence of this fact and of Proposition \ref{lemma:lemma31} is that the homotopic class in $\C\setminus\{-1,1 \}$ of the curves comprising the critical graph $\Gamma_{A,B}$ is invariant for $A\in \overline \Omega$. For $A, B>0$ the structure is well-known (see e.g.~\cite{MR1805976}): $-1<\zeta_-<\zeta_+<1$, and $\Gamma_{A,B}$ is comprised of the interval $[\zeta_-, \zeta_+]$ and of two loops, one emanating from $\zeta_-$ and encircling $-1$, and another one emanating from $\zeta_+$ and encircling $1$. In other words, it corresponds to the condition (b) above. Hence, we may discard the possibility (a) for the whole set of parameters satisfying the assumptions \eqref{ABcond}.

In the case (b), let $\zeta\in \{\zeta_-, \zeta_+\}$ be the zero of $R_{A,B}$ on the boundary of $\mathfrak{C}$. Then the third trajectory, emanating from the same zero, cannot diverge to $1$ or $\infty$: it would oblige two critical trajectories, coming from the other zero of $R_{A,B}$, to diverge to the same pole, contradicting Proposition~\ref{lemma:lemma31}.

Thus, we conclude that there exists a short trajectory, $\gamma_{A,B}$, connecting $\zeta_-$ and $\zeta_+$. Since we have discarded the case (a) mentioned above, this settles automatically the rest of the structure of the critical graph $\Gamma_{A,B}$.

Finally, the fact that it is $\zeta_-$ the zero on the boundary of $\mathfrak{C}$ (and in consequence, that $\zeta_+$ is connected with both $1$ and $\infty$ by critical trajectories) can be established by the deformation arguments, like in the proof of Proposition~\ref{homotopy}.
 
The distinguished  short trajectory $\gamma_{A,B}$ plays an essential role in what follows. For the rest of the paper we use a notation for the holomorphic branch of $\sqrt{R_{A,B} }  $ in $\C\setminus\gamma_{A,B}$:
\begin{equation}\label{branchR}
\mathcal R_{A,B}(z)=\sqrt{R_{A,B}(z) }, \quad z\in \C\setminus\gamma_{A,B}, \quad \lim_{z\to \infty }\frac{\mathcal R_{A,B}( z) }{z} =A+B+2. 
\end{equation}

Since by assumptions \eqref{ABcond}, $(A+B+2)^2\notin \R$, we have that the complement of $\Gamma_{A,B}\cup \overline{\mathfrak{C}}$ in $\C$ is a connected domain $\mathfrak S$ whose boundary points are $1$ and $\infty$ (see Figure~\ref{fig:globalstructure}). Let us show that it is actually a strip domain, as claimed.

 We introduce in $\mathfrak S$ the following analytic function,
\begin{equation} \label{def:phi}	
	\phi(z)=  \int^{z}_{\zeta_+} \frac{\mathcal R_{A,B}(t)}{t^2-1}\, dt.
\end{equation}
Let $\widehat \sigma$ be the orthogonal trajectory of $\varpi_{A,B}$ emanating from $\zeta_+$ that is the analytic continuation of the horizontal trajectory $\sigma_+$ that joins $\zeta_+$ and $1$. Function in \eqref{def:phi} is defined in such a way that
$$
\lim_{z\to \zeta_+, \, z\in \widehat{\sigma} } \phi(z)=0.
$$
\begin{proposition}
Under assumptions \eqref{ABcond}, function $\phi $ is a conformal mapping of the domain $\mathfrak S$ onto the vertical strip $0<\Re(z)< 2\pi \Im(A)$.
 \label{prop:mapping}
\end{proposition}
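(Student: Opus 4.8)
The plan is to exhibit $\phi$ as, up to the rotation $w=i\phi$, the natural parameter of $\varpi_{A,B}$ on the strip domain $\mathfrak S$, and then to read off both the univalence and the exact width from the trajectory structure of Theorem~\ref{thm:1} together with a residue computation. First I would record the elementary local facts: on $\mathfrak S$ the derivative $\phi'(z)=\mathcal R_{A,B}(z)/(z^2-1)$ is finite and non-vanishing, since its zeros $\zeta_\pm$ and its poles $\pm1$ all lie on $\partial\mathfrak S$, so $\phi$ is locally conformal; and since $\mathfrak S$ is simply connected with $\mathcal R_{A,B}$ single-valued there, $\phi$ is a well-defined single-valued holomorphic function. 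Squaring gives $(d\phi)^2=-\varpi_{A,B}$, i.e. $\varpi_{A,B}=(d(i\phi))^2$, so $i\phi$ is the natural parameter; consequently the horizontal trajectories of $\varpi_{A,B}$ in $\mathfrak S$ are exactly the level curves $\{\Re\phi=\const\}$ and the orthogonal trajectories are $\{\Im\phi=\const\}$.

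Next I would show that $\phi$ is a bijection onto a vertical strip. By Theorem~\ref{thm:1} the entire critical graph lies on $\partial\mathfrak S$, so the interior of $\mathfrak S$ contains no critical trajectory, and by Jenkins' three–pole theorem there are no recurrent ones; hence $\mathfrak S$ is foliated by regular horizontal trajectories, each a cross–cut joining the two boundary poles $1$ and $\infty$. Along such a trajectory $\Re\phi$ is constant while $\Im\phi$ is strictly monotone, and, because the trajectory diverges to the two double poles in its two directions, $\Im\phi$ sweeps all of $\R$; transversally, $\Re\phi$ increases strictly along the orthogonal trajectories. This product structure (equivalently, Strebel's representation theorem for strip domains, \cite[\S 10]{Strebel:84}) shows that $\phi$ maps $\mathfrak S$ conformally and bijectively onto a vertical strip $\{a<\Re\phi<b\}$, the two bounding lines being the images of the two boundary arcs of $\partial\mathfrak S$.

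It then remains to pin down $a$ and $b$. The normalization $\lim_{z\to\zeta_+,\,z\in\widehat\sigma}\phi(z)=0$ places the boundary arc carrying this approach on the line $\Re\phi=0$, so $a=0$. To obtain $b$ I would compute the real part of the period acquired in crossing the strip, which is realized by passing from one bank of a boundary pole to the other. Near $z=1$ this period equals $2\pi i\,\res_{z=1}\frac{\mathcal R_{A,B}(t)}{t^2-1}=2\pi i\cdot\frac{\mathcal R_{A,B}(1)}{2}=2\pi i A$, whose real part has modulus $2\pi\Im(A)$; near $z=\infty$ it equals $\pm 2\pi i\,(A+B+2)$, whose real part has modulus $2\pi\bigl|\Im(A+B+2)\bigr|=2\pi\Im(A)$ because $B\in\R$. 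The coincidence of these two moduli — which is exactly where the hypothesis $(A+B+2)^2\notin\R$ (equivalently $\Im(A)\neq0$) enters — forces the two bounding lines to be $\Re\phi=0$ and $\Re\phi=2\pi\Im(A)$; with $\Im(A)>0$ this gives $b=2\pi\Im(A)$, as claimed.

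The step I expect to be the real obstacle is the boundary bookkeeping underlying the univalence: verifying that each of the two boundary arcs of $\mathfrak S$ is carried onto a single vertical line of constant $\Re\phi$. The subtlety is that one of the arcs is not simple — it runs along both banks of the branch cut $\gamma_{A,B}$ and around the loop $\sigma_-$ encircling the pole $-1$ — so one must check that $\sigma_-$ contributes only the purely imaginary period $2\pi i B$ (leaving $\Re\phi$ unchanged) and that the two banks of $\gamma_{A,B}$ land on the same line, so that this whole ``dent'' degenerates onto the line $\Re\phi=2\pi\Im(A)$. Controlling these periods, together with the fact that the boundary limits of $\phi$ at the ramification point $\zeta_+$ are multivalued (the three prongs separate $\mathfrak S$ locally into three sectors, whose limiting values of $\phi$ differ by periods), is the delicate part; everything else reduces to the local normal forms at the critical points and to the residues already computed.
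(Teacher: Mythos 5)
Your proposal is correct, and the computations at its core (the residues $A$, $B$, $-(A+B+2)$ at $1$, $-1$, $\infty$ and the resulting periods $2\pi iA$, $2\pi iB$, $\mp 2\pi i(A+B+2)$) are exactly the ones the paper uses; but the logical route is genuinely different. You obtain univalence ``for free'' from the general structure theory: the Basic Structure Theorem plus the three-pole theorem identify $\mathfrak S$ as a strip domain, Strebel's representation theorem then says the natural parameter maps it conformally onto a Euclidean strip, and only the width and the normalization remain to be computed, which you do via the local period at either boundary pole. The paper goes in the opposite direction: it \emph{proves} that $\mathfrak S$ is a strip domain by exhibiting the conformal map, establishing an explicit boundary bijection --- the computation $\phi(\zeta_-^-)=-2\pi i$ in \eqref{2pi} shows that the entire non-simple boundary arc $\sigma_+^+\cup\gamma_{A,B}^+\cup\sigma_-^-\cup\gamma_{A,B}^-\cup\sigma_\infty^-$ is carried bijectively onto $i\R$, and the integral over an arc $\ell$ crossing $(-1,1)$ gives $\Re\int_\ell=2\pi\Im(A)$ for the other bounding line --- and then concludes by orientation. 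Note that $\int_\ell-\int_{\gamma_{A,B}}=-2\pi iA$ is precisely your period at the pole $z=1$, so the two width computations are the same in disguise; the paper's version also settles cleanly the sign (which side of $\Re(z)=0$ the image lies on), a point you handle somewhat loosely via ``modulus.'' Finally, the ``delicate boundary bookkeeping'' you defer at the end --- that $\sigma_-$ contributes only the purely imaginary period $2\pi iB$ and that both banks of $\gamma_{A,B}$ land on $\Re(z)=0$ --- is not an optional refinement in the paper's scheme but its central displayed computation; in your scheme it is indeed only a consistency check, since univalence is already supplied by the representation theorem. What your approach buys is economy (no explicit boundary traversal); what the paper's buys is self-containedness, since it does not presuppose the classification of $\mathfrak S$ as a strip domain, which is part of what Theorem~\ref{thm:1} is asserting.
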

 \begin{proof}
We fix the orientation of the critical graph as follows: both $\sigma_\infty$ and $\sigma_+$ are emanating from $\zeta_+$,  $\gamma_{A,B}$ is entering $\zeta_+$, and $\sigma_-$ is oriented clockwise. This orientation induces the $``+''$ and $``-''$ (that is the right and left) sides of each curve, that we indicate with superscripts. For convenience, we reproduce again the Figure~\ref{fig:globalstructure} in Figure~\ref{fig:globalstructure1}, indicating now the corresponding sides of the curves.

\begin{figure}[htb]
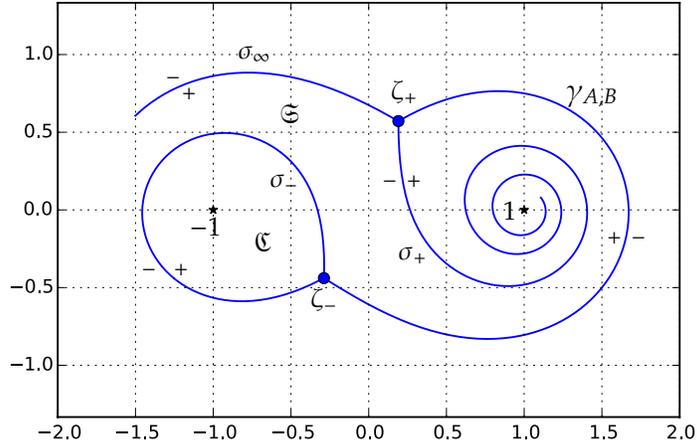

	\centering \begin{overpic}[scale=0.7]{Fig2}%
		\put(29,31){\small $-1 $}
		\put(37,29){\small $\mathfrak C$}
		\put(40,45){\small $\mathfrak S$}
		\put(68,33){\small $1 $}
		\put(44,22){\small $\zeta_- $}
		\put(54,48){\small $\zeta_+$}
		\put(28,48){\scriptsize $+$}
		\put(26,50){\scriptsize $-$}
		\put(81,30){\scriptsize $+$}
		\put(84,30){\scriptsize $-$}
		\put(23,26){\scriptsize $-$}
		\put(27,26){\scriptsize $+$}
		\put(53,37){\scriptsize $-$}
		\put(56,37){\scriptsize $+$}
		\put(75,48){ $\gamma_{A,B}$}
		\put(39,37){\small $\sigma_-$}
		\put(55,28){\small $\sigma_+$}
		\put(35,53){\small $\sigma_\infty$}
	\end{overpic}
	\caption{Sides of the curves forming the critical graph $\Gamma_{A,B}$ with the orientation indicated in the text.}
	\label{fig:globalstructure1}
\end{figure}

 Since $\Gamma_{A,B}$ is made of trajectories, $\phi$ maps each of these curves onto a vertical line. 
Using \eqref{eq:cond}, \eqref{branchR}, and operating as in the proof of Proposition \ref{premier}, we have 
\begin{equation} \label{2pi}
\begin{split}
\phi(\zeta_-^-)&=\lim_{z\to \zeta_-,\, z\in \gamma_{A,B}^-}\phi(z)=\int_{\zeta_{+} }^{\zeta_-}  \frac{ \mathcal R_{A,B}^-(t) }{t^2-1}\, dt \\
& = \frac{1}{2} \varointclockwise_{\gamma_{A,B} }  \frac{ \mathcal R_{A,B}(t) }{t^2-1}  \, dt
=  \pi i \left(   \frac{\mathcal R_{A,B} ( -1)}{-2} + \frac{\mathcal R_{A,B} ( 1)}{2}%
-\left( A+B+2\right) \right)   \\
&=  - 2\pi i.
\end{split}
\end{equation}
Thus, 
$$
\phi(\gamma_{A,B}^-)=(-2\pi i, 0),
$$
and in consequence, $\phi$ establishes a bijection of the boundary 
$$
\sigma_+^+ \cup \gamma_{A,B}^+ \cup \sigma_-^- \cup \gamma_{A,B}^- \cup \sigma_\infty^-
$$
of the strip domain $\mathfrak S$, oriented from 1 to $\infty$, and the imaginary axis $i\R$, oriented from $-i\infty$ to $+i\infty$. By orientation preservation, $\phi(\mathfrak S)$ lies in the right half-plane.

More precisely, let $\ell$ be a simple Jordan arc, from $\zeta_-$ to $\zeta_+$, and intersecting $\R$ only once, in $(-1,1)$. Using again the arguments from the proof of Proposition \ref{premier}, 
\begin{align*}
 \int_{\ell}  \frac{\mathcal R_{A,B}(t) }{t^2-1}\, dt & =  \frac{1}{2} \varointclockwise_{\ell }  \frac{ \mathcal R_{A,B}(t) }{t^2-1}  \, dt \\ 
& =  \pi i \left(   \frac{\mathcal R_{A,B} ( -1)}{-2} + \frac{\mathcal R_{A,B}  ( 1)}{2}%
-\left( A+B+2\right) \right)    \\
&=  - 2\pi i (A+1)= 2\pi \Im(A)- 2\pi i (\Re(A)+1) .
\end{align*}
Thus, under assumptions \eqref{ABcond}, 
$$
\Re \int_{\ell}  \frac{\mathcal R_{A,B} (t) }{t^2-1}\, dt =2\pi \Im(A)>0,
$$
which shows that the other boundary of the strip domain $\mathfrak S$ is mapped by $\phi$ onto the vertical line $\Re(z)=2\pi \Im(A)>0$. 

\end{proof}

 In the next section we will need one more technical result, related to the domain configuration of $\varpi_{A,B}$. Let $F$ be a 
Jordan curve  joining  $-1+i0$ and $-1-i0$,  lying entirely (except for its endpoints) in $\overline{\C} \setminus (-\infty, 1]$,  passing through $\zeta_\pm$ in such a way that $\gamma_{A,B}\subset F$, and otherwise disjoint with the critical graph $\Gamma_{A,B}$. We denote $F_1$ the open arc of $F$ joining $\zeta_+$ with $-1+i0$, and by $F_2$ the open arc of $F$ joining $\zeta_-$ with $-1-i0$.
 
 \begin{lemma}\label{lemma:positivity}
 	With the notations above,
 	\begin{align*}
&	\Re  \int_{\zeta_+}^z \frac{\mathcal R_{A,B}(t) }{t^2-1}\, dt < 0, \quad z\in F_1, \\
&	\Re  \int_{\zeta_-}^z \frac{\mathcal R_{A,B}(t) }{t^2-1}\, dt < 0, \quad z\in F_2. 
 	\end{align*}
 \end{lemma}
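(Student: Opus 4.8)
The plan is to identify, in both cases, the real part of the integral with a harmonic function that vanishes on the critical graph and decreases to $-\infty$ at the double pole $-1$, and then to invoke the maximum principle. For the continuations along $F_1$ and $F_2$ set
\[
\Phi_{+}(z)=\int_{\zeta_{+}}^{z}\frac{\mathcal R_{A,B}(t)}{t^{2}-1}\,dt,\qquad
\Phi_{-}(z)=\int_{\zeta_{-}}^{z}\frac{\mathcal R_{A,B}(t)}{t^{2}-1}\,dt,
\]
and $u_{\pm}=\Re\Phi_{\pm}$. I would first record the values forced by the trajectory structure of $\varpi_{A,B}$: since $\gamma_{A,B},\sigma_{+},\sigma_{\infty}$ are horizontal trajectories through $\zeta_{+}$ and $\sigma_{-}$ one through $\zeta_{-}$, the function $u_{+}$ is constant on $\gamma_{A,B},\sigma_{+},\sigma_{\infty}$ and $u_{-}$ is constant on $\sigma_{-}$; because $\Phi_{+}(\zeta_{+})=\Phi_{-}(\zeta_{-})=0$ and, by \eqref{2pi}, $\Phi_{+}(\zeta_{-})=-2\pi i\in i\R$, every one of these constants is $0$. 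Next, from the local form \eqref{localpoles2} at $z=-1$ one reads off $\dfrac{\mathcal R_{A,B}(t)}{t^{2}-1}=\dfrac{B}{t+1}+O(1)$, so $\Phi_{\pm}(z)=B\log(z+1)+O(1)$ and $u_{\pm}(z)\to-\infty$ as $z\to-1$, because $B>0$. Finally I note that the period of the integrand around $-1$ is $2\pi iB\in i\R$, while those around $1$ and $\infty$ have real part $\mp 2\pi\Im(A)\neq 0$; hence $u_{\pm}$ is single valued in the slit plane $\C\setminus(-\infty,1]$ away from $\gamma_{A,B}$, since both $1$ and $-1$ lie on the removed ray.

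I would treat $F_{2}$ first, as it lies in the circle domain $\mathfrak C$. There $\Phi_{-}$ is the natural parameter of $\varpi_{A,B}$: its closed trajectories map to vertical segments $\Re=\const$, the boundary $\sigma_{-}=\partial\mathfrak C$ to the line $\Re=0$, and the pole $-1$ to $\Re=-\infty$ (the enclosed period $2\pi iB$ being purely imaginary). Equivalently, $u_{-}$ is harmonic in $\mathfrak C\setminus\{-1\}$, vanishes on $\partial\mathfrak C$ and tends to $-\infty$ at $-1$; removing a small disc about $-1$, applying the maximum principle on the remaining region and letting the radius shrink gives $u_{-}<0$ throughout $\mathfrak C$. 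As the open arc $F_{2}$ issues from $\zeta_{-}\in\partial\mathfrak C$ into $\mathfrak C$ towards $-1-i0$, the second inequality follows. For $F_{1}$ the same conclusion is reached once one checks that it enters the region of negativity of $u_{+}$: near $\zeta_{+}$ the three rays $\gamma_{A,B},\sigma_{+},\sigma_{\infty}$ cut a neighbourhood into three sectors on which $u_{+}$ alternates in sign, and $F_{1}$ leaves $\zeta_{+}$ into the one on which $u_{+}<0$. Denoting by $G$ the connected component of $\{u_{+}<0\}$ whose boundary lies on the critical graph (where $u_{+}=0$) and whose closure contains $-1$, the hypotheses that $F$ lies in $\C\setminus(-\infty,1]$ and meets $\Gamma_{A,B}$ only along $\gamma_{A,B}$ confine $F_{1}\setminus\{\zeta_{+}\}$ to $G$; since $u_{+}\to-\infty$ at $-1$ and $u_{+}=0$ on $\partial G$, the maximum principle yields $u_{+}<0$ on $F_{1}$.

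The delicate point—and the main obstacle—is this last localization of $F_{1}$ inside a single component of $\{u_{+}<0\}$ reaching the pole $-1$: one must rule out that the continuation from $\zeta_{+}$ sweeps the period $2\pi iA$ around $1$, whose nonzero real part $-2\pi\Im(A)$ would spoil the sign, and one must verify that the negativity sector at $\zeta_{+}$ is joined to a neighbourhood of $-1$ within $\C\setminus(-\infty,1]$ without crossing $\sigma_{+},\sigma_{\infty}$. Both facts are consequences of the global description of the critical graph in Theorem~\ref{thm:1} and of the strip structure of $\mathfrak S$ in Proposition~\ref{prop:mapping}: the former fixes the homotopy class of $F_{1}$, the latter shows that crossing into $\mathfrak C$ contributes only the harmless imaginary constant $-2\pi i$, so that along $F_{1}$ one has $\Phi_{+}=-2\pi i+\Phi_{-}$ and hence $u_{+}=\Re\Phi_{-}<0$, exactly as for $F_{2}$.
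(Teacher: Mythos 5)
Your strategy is genuinely different from the paper's. The paper shares your first step (near $z=-1$ one has $\Phi_\pm(z)=B\log(z+1)+\mathcal O(1)$, so the real parts tend to $-\infty$ there), but then argues by contradiction using the transversal foliation: if $\Re\Phi_+$ vanished at some regular point $a\in F_1$, the path from $\zeta_+$ to $a$ could be deformed into a vertical arc issuing from $\zeta_+$ followed by an arc of the horizontal trajectory through $a$; the real part is constant on the latter and strictly monotone, hence nonzero, on the former, so $\Re\Phi_+$ cannot vanish on $F_1\setminus\{\zeta_+\}$, and continuity plus the behaviour at $-1$ finishes the proof. Your replacement of this by the harmonic/conformal structure of the domain configuration is legitimate, and your treatment of $F_2$ is clean and complete: $\Re\Phi_-$ is single-valued and harmonic in $\mathfrak C\setminus\{-1\}$ (the period at $-1$ being purely imaginary), vanishes on $\partial\mathfrak C=\sigma_-\cup\{\zeta_-\}$, tends to $-\infty$ at $-1$, and $F_2\subset\mathfrak C$.

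For $F_1$, however, there is a genuine gap, and it sits exactly where you yourself flag ``the delicate point.'' By Proposition~\ref{prop:mapping}, $\Re\phi\in(0,2\pi\Im(A))$ \emph{everywhere} in $\mathfrak S$, and $F_1$ necessarily begins inside $\mathfrak S$ (it leaves $\zeta_+$, which is not on $\overline{\mathfrak C}$). So the branch of $\Re\Phi_+$ carried along $F_1$ equals $\Re\phi$ minus the boundary value of $\Re\phi$ at the particular access of $\zeta_+$ from which $F_1$ departs; this difference is $\Re\phi>0$ if $F_1$ enters one of the two sectors adjacent to $\gamma_{A,B}$ (where $\phi$ has purely imaginary boundary values), and $\Re\phi-2\pi\Im(A)<0$ only if $F_1$ enters the sector between $\sigma_+$ and $\sigma_\infty$. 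Your claim that the three sectors at $\zeta_+$ ``alternate in sign'' is not correct as stated (with three sectors strict alternation is impossible; because $\mathcal R_{A,B}$ is cut along $\gamma_{A,B}$, the two sectors adjacent to $\gamma_{A,B}$ carry the \emph{same} sign), and the assertion that $F_1$ leaves $\zeta_+$ into the negative sector is never derived from the hypotheses on $F$ --- it is precisely equivalent to the identity $\Phi_+=-2\pi i+\Phi_-$ along $F_1$ that you state at the end, so the final paragraph restates the claim rather than proving it. Appealing to Theorem~\ref{thm:1} and Proposition~\ref{prop:mapping} ``fixing the homotopy class'' does not close this: one still has to show that the continuation along $F_1$ picks up only imaginary periods (those of $-1$ and of $\gamma_{A,B}$) and never the period $2\pi iA$ around $1$ or the one at $\infty$, whose real parts are $\mp2\pi\Im(A)\neq0$. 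Until that localization is carried out, the $F_1$ half of the lemma is not established; the paper's deformation-into-trajectories argument is designed precisely to bypass this branch bookkeeping.
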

\begin{proof}
First, observe that by \eqref{eq:cond},
$$
\int_{\zeta_+}^z \frac{\mathcal R_{A,B}(t) }{t^2-1}\, dt = B\log(z+1) +\mathcal O(1), \quad z\to -1,
$$
where we choose an appropriate branch of the logarithm. This shows that the inequalities hold in a neighborhood of $z=-1$. On the other hand, assume there is a point $a\in F_1$, $a\neq \zeta_+$, such that
\begin{equation}
\label{contrac}
\Re  \int_{\zeta_+}^a \frac{\mathcal R_{A,B}(t) }{t^2-1}\, dt =0.
\end{equation}
By assumptions, $a\notin \Gamma_{A,B}$. Let $\ell$ be the horizontal trajectory of $\varpi_{A,B}$ passing through $a$; it must intersect at least one of the vertical trajectories $\ell^\perp$ of $\varpi_{A,B}$ emanating from $\zeta_+$. Hence, deforming the path from $\zeta_+$ to $a$ into the union of an arc $\ell$ and an arc from $\ell^\perp$ we run into contradiction with \eqref{contrac}.
\end{proof}

\section{An equilibrium problem for the logarithmic potential} \label{section:equilibrium}

On the short trajectory $\gamma_{A,B}$ we define the following measure, absolutely continuous with respect to the arc-length measure:
\begin{equation}
\label{def:mu}
d\mu(z)=\frac{1}{2\pi i} \frac{\mathcal R_{A,B}^+(z)}{1-z^2}\, dz,
\end{equation}
with $\mathcal R_{A,B}$ defined in \eqref{branchR}, and the $+$ boundary values are with respect to the chosen orientation of $\gamma_{A,B}$. Since $\gamma_{A,B}$ is a horizontal trajectory of $\varpi_{A,B}$, and using  \eqref{2pi} we conclude that $\mu$ is a positive probability measure defined on this arc. Straightforward calculations using residues, similar to those performed in \eqref{2pi}, show that
\begin{equation}
\label{CauchiMu}
\int_{\gamma_{A,B}} \frac{d\mu(t)}{t-z} = \frac{1}{2} \left(\frac{A}{z-1} + \frac{B}{z+1} +    \frac{\mathcal R_{A,B}(z)}{1-z^2} \right).
\end{equation}

For measure $\mu$ on $\C$,  its
logarithmic potential is defined by 
$$ 
V^\mu(z)=-\int \log|t-z| \, d\mu(t)\,. 
$$ 
By \eqref{CauchiMu}, there exists a constant $c\in \R$ such that for $z\in \C\setminus \gamma_{A,B}$,
\begin{equation}
\begin{split}
\label{PotentialMu}
V^\mu(z) & =   \frac{1}{2} \Re  \int^z \left(\frac{A}{t-1} + \frac{B}{t+1} +    \frac{\mathcal R_{A,B}(t)}{1-t^2} \right)\, dt 
\\ & = c + \frac{1}{2} \Re   \left( A\log(z-1) + B\log(z+1) +\mathcal W(z)   \right) ,
\end{split}
\end{equation}
where 
\begin{equation}
\label{defW}
\mathcal W(z)=        \int_{\zeta_-}^z    \frac{\mathcal R_{A,B}(t)}{1-t^2}  \, dt
\end{equation}
is a multivalued analytic function in $ \C\setminus \gamma_{A,B}$ with a single-valued real part. 

Let us define
$$
\psi(z) =-  \frac{1}{2} \Re   \left( A\log(z-1) + B\log(z+1)    \right) , \quad z\in \C\setminus \gamma_{A,B}.
$$
Equation \eqref{PotentialMu} can be rewritten as
\begin{equation*} 
V^\mu(z) +\psi(z)  =   c + \Re      \mathcal W(z)    ,\quad z\in \C\setminus \gamma_{A,B}.
\end{equation*}
Since $\gamma_{A,B}$ is a trajectory of $\varpi_{A,B}$, we see that
$$
V^\mu(z) +\psi(z)  =   c ,\quad z\in \C\setminus \gamma_{A,B}.
$$

Let $F$ be a 
 Jordan curve  joining  $-1+i0$ and $-1-i0$,  lying entirely (except for its endpoints) in $\overline{\C} \setminus (-\infty, 1]$,  passing through $\zeta_\pm$ in such a way that $\gamma_{A,B}\subset F$, and otherwise disjoint with the critical graph $\Gamma_{A,B}$. From Lemma \ref{lemma:positivity} we conclude that
\begin{equation}
V^\mu(z)+\psi(z)
  \begin{cases}
    = c=\const, & \text{for $z \in \supp(\mu)=\gamma_{A,B}$}, \\
    \geq c & \text{for $z \in F$}.
  \end{cases} \nonumber
\end{equation}
This property characterizes the fact that $\mu$ is actually the \emph{equilibrium measure} of $F$ in the external field $\psi$, and $c$ is the corresponding equilibrium constant (see \cite{Gonchar:87,Saff:97}). Furthermore, for $\mathcal W$ defined in \eqref{defW} the trivial identity $\mathcal W^+(z)=\mathcal W^-(z)$ on\footnote{Here we understand by $\gamma_{A,B}$ the open arc without its endpoints $\zeta_\pm$.} $\gamma_{A,B}$  yields the so-called  \emph{$S$-property} in the
external field $\psi$:  for every $\zeta \in \gamma_{A,B}$,
\begin{equation} \label{simetria}
  \frac{\partial (V^\mu+\psi)}{\partial n_-}\,(z) =
  \frac{\partial (V^\mu+\psi)}{\partial n_+}\,(z)\,,
\end{equation}
where $n_-=-n_+$ are the normals to $\gamma_{A,B}$.

\section{Relation to the asymptotics of Jacobi polynomials with varying parameters} \label{section:Jacobi}

Let us return to the Jacobi polynomials considered in Section \ref{sec:Intro}, and  consider the case of varying coefficients $\alpha$ and $\beta$ and study the asymptotic behavior of the zeros of the  sequences of polynomials $p_n$ given in \eqref{eq_P}, where the constants $A$ and $B$ satisfy the assumptions \eqref{cond sur Aet B}. As it was mentioned, it is sufficient to restrict our attention to the case \eqref{ABcond}.

 Our main goal now is to study the convergence of the
sequence $\nu_n$ of the zero counting measures \eqref{zerocountingmeasure} in the weak-$^*$ topology and, if the limit
exists, to find it explicitly.

The main result of this section is the following theorem:
\begin{theorem} \label{teoJac1}
Let the sequence of generalized Jacobi polynomials $p_n$ in
(\ref{eq_P}) be such that the pair $(A,B)$ satisfies assumptions \eqref{ABcond}. Then there is a unique measure $\mu$
such that
\begin{equation*}
\nu_n \stackrel{*}{\longrightarrow} \mu\,, \quad n \to \infty\,.
\end{equation*}
The measure $\mu$ is supported on the short trajectory $\gamma_{A,B}$, is absolutely continuous with respect to
the linear Lebesgue measure on $\gamma_{A,B}$, and   is given by the formula \eqref{def:mu}.  
\end{theorem}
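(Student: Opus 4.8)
The plan is to establish the weak-$^*$ convergence $\nu_n \stackrel{*}{\longrightarrow} \mu$ by exploiting the complex non-hermitian orthogonality of the Jacobi polynomials (available from \cite{MR2149265}) together with the Gonchar-Rakhmanov-Stahl theory, for which all the geometric ingredients have already been assembled in the previous sections. The key observation is that the candidate limit measure $\mu$ defined in \eqref{def:mu}, supported on the short trajectory $\gamma_{A,B}$, has already been shown in Section~\ref{section:equilibrium} to be the equilibrium measure of the curve $\gamma_{A,B}$ in the external field $\psi$, and to satisfy the $S$-property \eqref{simetria}. Thus the heart of the matter is to connect this equilibrium-theoretic object to the actual asymptotic zero distribution of $p_n$.

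First I would recall the complex orthogonality relations: by \cite{MR2149265}, the polynomial $p_n = P_n^{(nA,nB)}$ satisfies
\begin{equation*}
\int_{\Sigma} p_n(t)\, t^k\, (t-1)^{nA}(t+1)^{nB}\, dt = 0, \qquad k=0,1,\dots,n-1,
\end{equation*}
over a suitable contour $\Sigma$ in $\C\setminus\{-1,1\}$ in the homotopy class determined by \eqref{cond sur Aet B}. The external field associated with the weight $|(t-1)^{A}(t+1)^{B}|$ is precisely $\psi$ in \eqref{defPsiexternalintro}. The GRS program then reduces the determination of $\lim \nu_n$ to finding, within the admissible homotopy class, the curve carrying the equilibrium measure with the $S$-property. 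The uniqueness of such an $S$-curve in this homotopy class — guaranteed here because Theorem~\ref{thm:1} produces a \emph{unique} short trajectory $\gamma_{A,B}$, and Proposition~\ref{homotopy} pins down its homotopy type — is what forces $\supp\mu = \gamma_{A,B}$ and the density \eqref{def:mu}.

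The rigorous upper and lower bounds on the counting measure I would obtain through the standard two-step argument. For the \emph{upper} estimate one shows that $\limsup \tfrac1n \log|p_n(z)| \le -\bigl(V^\mu(z)+\psi(z)\bigr)+c$ away from $\gamma_{A,B}$; for the \emph{lower} bound one uses the orthogonality and a saddle-point / steepest-descent deformation of the contour onto $\gamma_{A,B}$, where the phase function $\Re\,\mathcal W(z)$ from \eqref{defW} attains its extremum exactly on the trajectory. Here the inequalities of Lemma~\ref{lemma:positivity}, $\Re\int_{\zeta_\pm}^z \mathcal R_{A,B}(t)/(t^2-1)\,dt<0$ on the arcs $F_1,F_2$, guarantee that no zeros escape to the rest of the curve $F$ and that the contour can be collapsed onto $\gamma_{A,B}$ without crossing stationary points other than $\zeta_\pm$. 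Combining the two bounds yields $\tfrac1n\log|p_n|\to -(V^\mu+\psi)+c$ locally uniformly off $\gamma_{A,B}$, and applying the $\bar\partial$-operator (in the distributional sense, since $V^\mu$ is the logarithmic potential of $\mu$) recovers $\nu_n\to\mu$.

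The main obstacle, as the authors themselves flag in the introduction, is that $\psi$ is neither harmonic nor single-valued in $\C$ under \eqref{cond sur Aet B}, so the general existence theorem for $S$-curves of \cite{Rakhmanov:2012fk} does not apply off the shelf and the abstract uniqueness of continuous critical measures for complex external fields is open. The way around this is precisely the explicit route taken in the paper: rather than invoking abstract existence, one uses the quadratic-differential analysis of Theorem~\ref{thm:1} to \emph{construct} the $S$-curve by hand, verifies the $S$-property directly via $\mathcal W^+=\mathcal W^-$ on $\gamma_{A,B}$, and then transplants the Riemann--Hilbert steepest-descent analysis of \cite{MR2124460,MR2142296} — which, as noted at the end of Section~\ref{sec:Intro}, carries over almost verbatim once the global trajectory structure is known. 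In short, once the trajectory structure and the $S$-property are in hand, the convergence $\nu_n\to\mu$ follows from the now-classical potential-theoretic and RH machinery; the real work was the global topology, already done in Theorem~\ref{thm:1}.
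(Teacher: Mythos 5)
Your proposal follows essentially the same route as the paper: the paper's proof of Theorem~\ref{teoJac1} consists precisely of invoking the non-hermitian orthogonality on a contour $F$ (Proposition~\ref{propOrt}, from \cite{MR2149265}), the equilibrium/$S$-property characterization of $\mu$ established in Section~\ref{section:equilibrium} (including Lemma~\ref{lemma:positivity}), and the Gonchar--Rakhmanov--Stahl machinery of \cite{Gonchar:87}, with the explicit construction of the $S$-curve via Theorem~\ref{thm:1} replacing any abstract existence theorem. Your additional unpacking of the upper/lower bound scheme inside the GRS black box is a reasonable sketch of what the cited theorem does, so the argument matches the paper's.
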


\begin{figure}[htb]
\centering \begin{overpic}[scale=0.7]{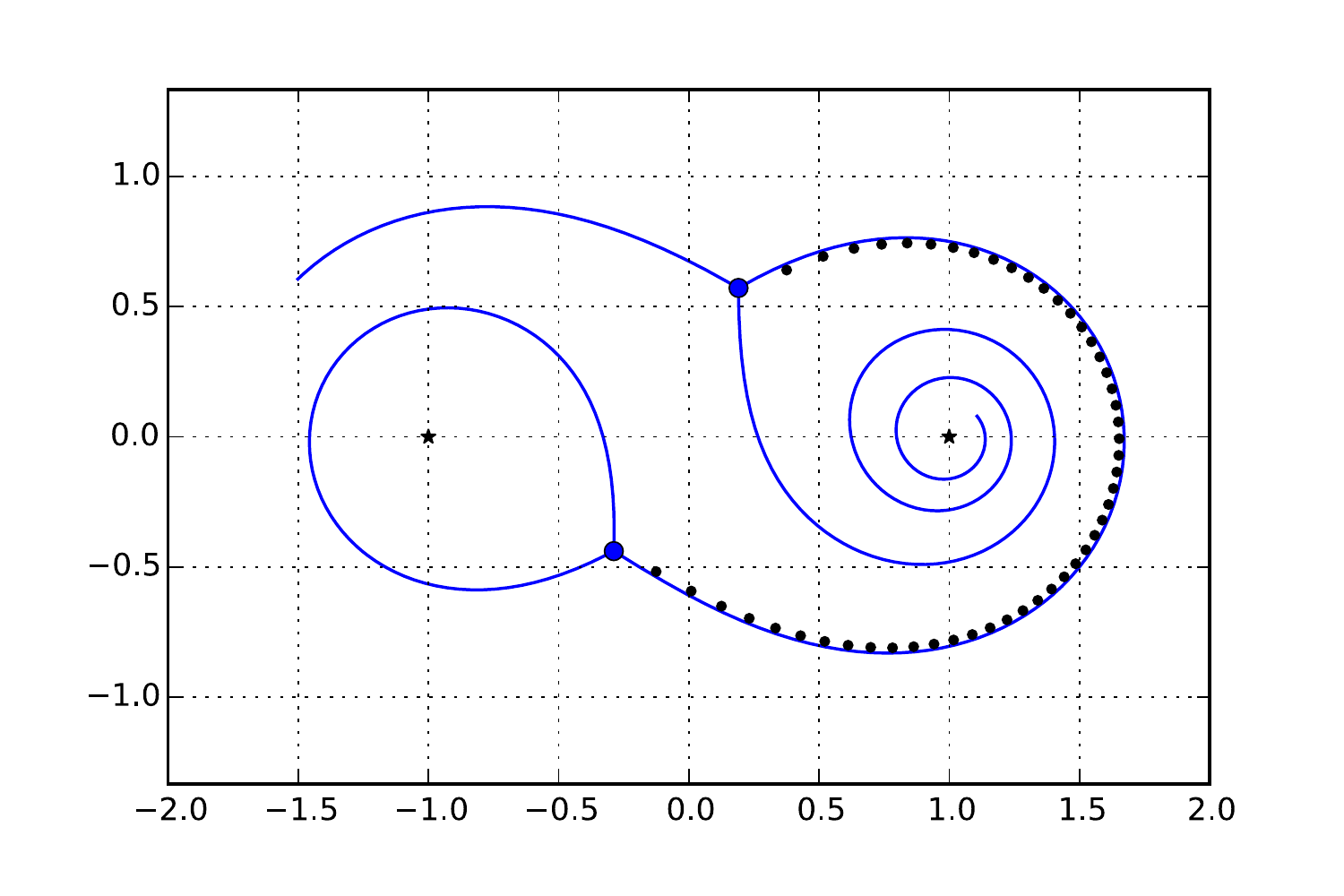}%
\put(29,31){\small $-1 $}
\put(37,29){\small $\mathfrak C$}
\put(40,45){\small $\mathfrak S$}
\put(68,33){\small $1 $}
\put(44,22){\small $\zeta_- $}
\put(54,48){\small $\zeta_+$}
\put(75,48){ $\gamma_{A,B}$}
\put(39,37){\small $\sigma_-$}
\put(55,28){\small $\sigma_+$}
\put(35,53){\small $\sigma_\infty$}
\end{overpic}
\caption{Critical graph $\Gamma_{A,B}$ of  $\varpi_{A,B}$, with $A=  -1.1 + 0.1i$ and $ B= 1$ (Figure~\ref{fig:globalstructure}), and the zeros of the corresponding polynomial $p_{50}$ (Figure~\ref{fig:onlyzeros}) superimposed.}
\label{fig:Withzeros}
\end{figure}

The main property satisfied by polynomials $p_n$ is the non-hermitian orthogonality conditions. 
Integrating by parts successively the Rodrigues formula
(\ref{RodrJac}), it is straightforward to obtain the following
result, proved in \cite{MR2149265}:

\begin{proposition} \label{propOrt}
Under assumptions \eqref{ABcond},  let   $F$ be a 
Jordan curve  joining  $-1+i0$ and $-1-i0$, and lying entirely (except for its endpoints) in $\overline{\C} \setminus (-\infty, 1]$. Then, for all sufficiently large $n\in \N$, 
\begin{equation} 
\oint_F P_n^{(\alpha,\beta)}(z)\, z^k (z-1)^\alpha (z+1)^\beta dz =
0\,, \quad k=0, \dots, n-1\,. \nonumber
\end{equation}
Here the integral is understood in terms of the analytic continuation
of any branch of the integrand along $F$\,.
\end{proposition}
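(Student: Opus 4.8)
The plan is to start from the Rodrigues formula \eqref{RodrJac} and integrate by parts $n$ times against the test functions $z^k$, $k=0,\dots,n-1$, exploiting the fact that the integration is over a closed Jordan curve $F$ so that all boundary terms collapse. First I would substitute \eqref{RodrJac} into the integral, writing
\begin{equation*}
\oint_F P_n^{(\alpha,\beta)}(z)\, z^k (z-1)^\alpha (z+1)^\beta \, dz
= \frac{1}{2^n n!} \oint_F z^k \left(\frac{d}{dz}\right)^n
\left[ (z-1)^{n+\alpha}(z+1)^{n+\beta}\right] dz,
\end{equation*}
since the factors $(z-1)^{-\alpha}(z+1)^{-\beta}$ in \eqref{RodrJac} cancel exactly against the weight $(z-1)^\alpha(z+1)^\beta$. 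Here the branch of the integrand and of $H(z)\isdef (z-1)^{n+\alpha}(z+1)^{n+\beta}$ must be fixed by analytic continuation along $F$, which is legitimate because $F$ avoids the cut $(-\infty,1]$ and encircles $-1$ only.

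Next I would integrate by parts $n$ times, moving each derivative off of $H(z)$ and onto the polynomial $z^k$. The key point is that every boundary contribution vanishes: after $j$ integrations by parts the boundary term is the jump of an expression of the form $z^{k-\cdots}\,(d/dz)^{n-1-j}H(z)$ evaluated at the two endpoints $-1\pm i0$ of the closed contour $F$. Since $n+\beta=n(1+B)$ with $B>0$, for all sufficiently large $n$ the function $H$ and all of its derivatives up to order $n-1$ carry a factor $(z+1)^{n+\beta-(n-1-j)}$ whose exponent has real part $>0$; hence each such derivative tends to $0$ as $z\to-1$ from either side, and the two endpoint contributions cancel. This is precisely where the hypothesis $B>0$ and the requirement ``for all sufficiently large $n$'' enter. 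After the $n$ integrations by parts we arrive at
\begin{equation*}
\frac{(-1)^n}{2^n n!}\oint_F \left(\frac{d^n}{dz^n} z^k\right) H(z)\, dz = 0,
\qquad k=0,\dots,n-1,
\end{equation*}
which vanishes identically because $z^k$ has degree $k<n$, so its $n$-th derivative is zero.

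The main obstacle, and the step requiring genuine care rather than routine manipulation, is the justification that the boundary terms truly vanish at the reentrant endpoint $-1\pm i0$: one must track the branch of $H$ and of its derivatives along $F$ and verify that the real part of the relevant exponent of $(z+1)$ stays positive through all $n$ integrations by parts. This is where the linear scaling $\alpha=nA$, $\beta=nB$ is essential, since it is $n+\beta$ (growing like $n$) rather than a fixed $\beta$ that controls the order of vanishing, guaranteeing the estimate uniformly for large $n$. Because this computation is carried out in detail in \cite{MR2149265}, I would present the integration-by-parts scheme and the vanishing-of-boundary-terms argument in outline and refer to that reference for the full verification.
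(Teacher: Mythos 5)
Your argument is correct and is exactly the proof the paper has in mind: the paper disposes of this proposition in one line (``integrating by parts successively the Rodrigues formula'') and defers the details to \cite{MR2149265}, which is precisely the integration-by-parts scheme you describe, with the boundary terms at $-1\pm i0$ killed because after $j$ integrations by parts the surviving power of $(z+1)$ has exponent with real part $n(1+B)-(n-1)\ge nB+1>0$. The only slip is the parenthetical claim that $F$ ``encircles $-1$ only'' --- $F$ has its two endpoints at $-1$ and winds around $+1$ --- but this is immaterial to the argument, which needs only analytic continuability of the integrand along $F$ and the decay of the first $n-1$ derivatives of $(z-1)^{n+\alpha}(z+1)^{n+\beta}$ at the endpoints.
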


The main tools for the study of the weak asymptotic behavior of
polynomials satisfying a non-hermitian orthogonality have been
developed in the seminal works of Stahl \cite{Stahl:86} and
Gonchar and Rakhmanov \cite{Gonchar:87}. They showed that when the
complex analytic weight function depends on the degree of the
polynomial, the limit zero distribution is characterized by an
equilibrium problem on a compact set in the presence of an
external field and satisfying the $S$-property described in Section~\ref{section:equilibrium}. In fact, Theorem~\ref{teoJac1} is a direct consequence of Proposition~\ref{propOrt}, the properties of $\mu$ established in Section~\ref{section:equilibrium}, and the original work  \cite{Gonchar:87} (see also \cite{MR1805976}). 

Finally, as it was mentioned in the Introduction, measure $\mu$ and the structure of the trajectories of $\varpi_{A,B}$ are also the main ingredients of the steepest descent method for the Riemann--Hilbert characterization of the Jacobi polynomials. The analysis follows almost literally the calculations of \cite{MR2142296}, so we refer the reader to that paper for the details.

\section*{Acknowledgments} 

The first and second authors (AMF and PMG) were partially supported by MICINN of Spain and by the
European Regional Development Fund (ERDF) under grants
MTM2011-28952-C02-01 and MTM2014-53963-P, by Junta de Andaluc\'{\i}a (the research group FQM-229), and by Campus de Excelencia Internacional del Mar (CEIMAR) of the University of Almer\'{\i}a.  
Additionally, AMF was supported by Junta de Andaluc\'{\i}a through the Excellence Grant P11-FQM-7276.
Part of this work was carried out during the visit of AMF to the Department of Mathematics of the Vanderbilt University. He acknowledges the hospitality of the hosting department, as well as a partial support of the Spanish Ministry of Education, Culture and Sports through the travel grant PRX14/00037.

We also wish to thank the anonymous referee for very useful remarks.

\bigskip 

\def\cprime{$'$}
\providecommand{\bysame}{\leavevmode\hbox to3em{\hrulefill}\thinspace}
\providecommand{\MR}{\relax\ifhmode\unskip\space\fi MR }
\providecommand{\MRhref}[2]{%
  \href{http://www.ams.org/mathscinet-getitem?mr=#1}{#2}
}
\providecommand{\href}[2]{#2}

\obeylines
\texttt{
A. Mart\'{\i}nez-Finkelshtein (andrei@ual.es)
Department of Mathematics
University of Almer\'{\i}a, Spain, and
Instituto Carlos I de F\'{\i}sica Te\'{o}rica y Computacional
Granada University, Spain
\medskip
P. Mart\'{\i}nez-Gonz\'alez (pmartine@ual.es)
Department of Mathematics
University of Almer\'{\i}a, Spain
\medskip
F. Thabet (faouzithabet@yahoo.fr)
ISSAT, University of Gabes,
Gab\'es, Tunisia
}

\end{document}